\def\sS{\mathsf{S}}
\def\sI{\mathsf{I}}
\def\sR{\mathsf{R}}
\def\sD{\mathsf{D}}
\def\lf{\mathsf{LF}}
\def\li{\mathsf{LI}}
\def\mf{\mathsf{MF}}
\def\mi{\mathsf{MI}}
\def\hf{\mathsf{HF}}
\def\hi{\mathsf{HI}}
\def\blambda{\boldsymbol{\lambda}}
\newcommand\cD{\mathcal D}
\newcommand\bp{\boldsymbol{p}}
\newcommand\bu{\boldsymbol{u}}
\newcommand{\ubar}[1]{\underaccent{\bar}{#1}}
\newcommand{\ubarc}{\underaccent{\bar}{c}}
\newtheorem{theorem}{Theorem}[section]
\newtheorem{remark}[theorem]{Remark}
\newtheorem{assumption}[theorem]{Assumption}
\newcommand\EE{\mathbb E}
\def\balpha{\boldsymbol{\alpha}}
\def\bnu{\boldsymbol{\nu}}
\def\bZ{\boldsymbol{Z}}
\DeclareMathOperator*{\argmin}{arg\,min}
\DeclareMathOperator*{\argmax}{arg\,max}
\setlist[itemize]{noitemsep, topsep=2pt, parsep=2pt, partopsep=2pt}
\title{Incorporating Authority Perception, Economic Status, and Behavioral Response in Infectious Disease Control}
\author{
Huaning Liu\thanks{Department of Statistics, University of Illinois Urbana-Champaign, Champaign, IL 61820, USA.}
\ \ \
Junke Yang\thanks{Division of Biostatistics, College of Public Health, The Ohio State University, Columbus, OH 43210, USA.}
\ \ \
Soren L. Larsen\thanks{Department of Demography, University of California Berkeley, Berkeley, CA 94720, USA.}
\ \ \
Pamela P. Martinez\thanks{Department of Microbiology, University of Illinois Urbana-Champaign, Urbana, IL 61801, USA.}\ \thanks{Carl R. Woese Institute for Genomic Biology, University of Illinois Urbana-Champaign, Urbana 61801, USA.}
\ \ \
G\"{o}k\c{c}e Dayan{\i}kl{\i}\footnotemark[1]\ \footnotemark[5] \ \thanks{Corresponding author. Email address: {\tt gokced@illinois.edu}}
}
\date{}
\begin{document}

\maketitle

\begin{abstract}
We introduce a multi-population mean field game framework to examine how economic status and authority perception shape vaccination and social distancing decisions under different epidemic control policies. We carried out a survey to inform our model and stratify the population into six groups based on income and perception of authority, capturing behavioral heterogeneity. Individuals adjust their socialization and vaccination levels to optimize objectives such as minimizing treatment costs, complying with social-distancing guidelines if they are authority-followers, or reducing losses from decreased social interactions if they are authority-indifferents, alongside economic costs. Public health authorities influence behavior through social-distancing guidelines and vaccination costs. We characterize the Nash equilibrium via a forward–backward differential equation system, provide its mathematical analysis, and develop a numerical algorithm to solve it. Our findings reveal a trade-off between social-distancing and vaccination decisions. Under stricter guidelines that target both susceptible and infected individuals, followers reduce both socialization and vaccination levels, while indifferents increase socialization due to followers’ preventative measures. Adaptive guidelines targeting infected individuals effectively reduce infections and narrow the gap between low- and high-income groups, even when susceptible individuals socialize more and vaccinate less. Lower vaccination costs incentivize vaccination among low-income groups, but their impact on disease spread is smaller than when they are coupled with social-distancing guidelines. Trust-building emerges as a critical factor in epidemic mitigation, underscoring the importance of data-informed, game-theoretical models that aim to understand complex human responses to mitigation policies.
\end{abstract}

\section*{Introduction}
Epidemic mitigation policies have been crucial strategies for controlling outbreaks, as evidenced throughout the COVID-19 pandemic \cite{flaxman2020estimating, eyre2022effect, Ruggeri2024}. During this period, individuals navigated multiple decision processes, such as changing their contact rates, mask use, and vaccine uptake, that varied systematically with demographic and economic factors \cite{Mena2021, Badillo2021, Levy2022a, Larsen2023, Manna2024}. Disentangling the role of host heterogeneity in decision-making and adherence to various public health policies is an active area of research with significant economic and public health implications. While population-level mathematical models have explored the interplay between epidemic dynamics and these variables (e.g.\cite{Bai2021-rn, Mahmud2025-dx, albi2021modelling, snellman2022modelling, jia2025learning}), these models do not account for individual-level decision-making under varying mitigation policies. Moreover, mitigation strategies are not always followed by individuals \cite{Li2021, Levy2022b}, which makes it difficult to measure their impact on disease transmission, with important implications for the effectiveness of disease control.

Classical population-level models of transmission, such as the susceptible-infected-recovered (SIR) model, do not incorporate rational decision-making at the individual-level in response to disease incidence. Rather than modeling only the population-level effects of public health policies, it is imperative to include decisions individuals make about vaccination, social distancing, and mask use. Incorporating these choices and the interactions among individuals is essential for further understanding the complexity of epidemic spread and for implementing interventions that effectively limit transmission. The emergent behavior of the dynamical system of interacting decision-makers arises from an equilibrium in which no individual has an incentive to change their behavior.

In this work, we evaluate the effect of different public health policies for rational-decision makers by implementing a mean field games (MFGs) approach. MFGs have been used in many settings, such as energy market modeling and climate change policy decisions~\cite{aid2020entry, djehiche2020price, carmona2022mean, alasseur2020extended, bagagiolo2014mean, bichuch_acc}, traffic management~\cite{chevalier2015micro, huang2021dynamic, festa2018mean}, advertisement decisions~\cite{carmona2021mean, salhab2022dynamic}, price formation~\cite{lachapelle2016efficiency, gomessaude2021meanprice}, and systemic risk in banking~\cite{carmonafouquesun2015mean}. In recent years, MFGs and their extensions have been used to model epidemic spread among rational individuals~\cite{aurell2022optimal,aurell2022finite,bremaud2024mean,olmez2022modeling,lee2020controlling,elie2020contact,cho2020mean,Doncel_Gast_Gaujal_2022,pnas_epidemics}. However, none of these studies have considered in their models (1) the joint rational decision-making of individuals concerning both vaccination and social distancing, and (2) population  stratification based on economic status and perceptions of authority.

We defined a population split into 6 groups based on 3 income levels and whether or not individuals would like to follow government recommendations. These groups were determined based on findings from a recent survey conducted in Illinois, which aimed to characterize how perceived risk influences individual behavior. This model also takes into account individuals' choices with respect to non-pharmaceutical interventions and vaccine uptake, which allows the quantification of disease outcomes under different control policies. 

Our main results reveal a trade-off between vaccine prioritization and social distancing, as individuals tend to lower their vaccination uptake when strict social-distancing guidelines are imposed. The findings further reveal free-rider behavior among the indifferent individuals under strict social distancing policies, allowing them to engage in higher socialization levels by benefiting from the overall protective behavior of recommendation-followers. These results emphasize the importance of considering decision-making of individuals instead of assuming compliance when implementing infectious disease models to inform public health agencies.

\section*{Multi-Population Mean Field Game}
To identify emerging properties across different epidemic mitigation strategies, we first need to determine how individuals' behaviors reach an equilibrium response to these policies. One of the most commonly used equilibrium notions is the Nash equilibrium, in which no individual has a preferable deviation from their current behavior. This means that if any individual changes their behavior while others remain unchanged, their personal cost increases. In this way, the Nash equilibrium represents a \textit{stable} equilibrium. However, computing a Nash equilibrium becomes increasingly challenging as the number of individuals grows, due to the exponential increase in pairwise interactions. The difficulty is further amplified in dynamic and stochastic settings, such as the models implemented in this work. 

To overcome this challenge, we use mean field games, which allow mean field approximations by assuming the population size approaches infinity~\cite{huang2007large, lasry2006jeux,lasry2007mean}.  In these games, individuals are assumed to be identical and interact through mean field interactions (i.e., population statistics such as distribution or average or proportion of states, controls, or both). As the population grows, the influence of any single individual becomes negligible, and deviations in their behavior do not affect the population statistics. Since the individuals are identical, this allows us to focus on a \textit{representative individual} and their interaction with the population through the population statistics. 

Finding equilibrium in this framework involves satisfying two conditions. First, the representative individual should choose their \textit{best response} control, such as the optimal socialization and vaccination levels that minimize their personal cost over a specific time horizon given the population statistics (e.g., the proportion of infected individuals over time). Second, since all individuals are identical, the population statistics must evolve according to the dynamics driven by the best response controls of representative individuals. This fixed point methodology yields an approximate Nash equilibrium that is characterized by a forward-backward differential equations system~\cite{Bensoussan_Book, CarmonaDelarue_book_I}. To incorporate heterogeneity among the individuals, we extend this framework to multi-population MFGs, in which the population is divided into subgroups. Individuals within the same group share identical model parameters, while individuals in different groups can differ in their characteristics~\cite{cirant2015multi, bensoussan2018meanmulti, 9148985, dayanikli2024multi}. Our approach uniquely incorporates survey responses to calibrate and parametrize these mathematical models, integrating theory with empirical behavioral choices.

\section*{Model}

\subsection*{Population Structure}

Following the survey results described in the end of the Model section and Supporting Information (SI), we consider a large (mathematically infinite) population stratified by economic status (\textit{low}, \textit{middle}, \textit{high}) and perception of authority (\textit{follower} or \textit{indifferent}), resulting in six distinct groups:
\begin{itemize}
    \item Low-Follower ($\lf$), Low-Indifferent ($\li$),
    \item Middle-Follower ($\mf$), Middle-Indifferent ($\mi$),
    \item High-Follower ($\hf$), High-Indifferent ($\hi$).
\end{itemize}
Each group is characterized by its own behavioral model and epidemiological parameters and we assume that individuals within the same group are homogeneous. Individuals control their level of social distancing (i.e.   \textit{socialization level}), which also includes other non-pharmaceutical prevention behaviors such as mask wearing, and \textit{vaccination level} to minimize their personal costs, where these two metrics vary by group. We use the following notation to denote the set of all groups: $[K]=\{\lf, \li, \mf, \mi, \hf, \hi\}$. The group proportions are denoted by $m^k$, for all $k\in[K]$ where $m^k$ is positive and $\sum_{k\in[K]} m^k=1$.

\subsection*{Infection States and Controls}
Each individual transitions among three infection states: Susceptible ($\sS$), Infected ($\sI$), and Recovered ($\sR$). Let $T>0$ be the terminal time of the modeling horizon, where $T$ can represent a season, a year, or similar period. Since the groups are homogeneous, we can focus on a \textit{representative} individual in each group. For a \textit{representative} individual in group $k$, the controls at time $t\in[0,T]$ are denoted as follows:
\begin{itemize}
    \item Socialization level: $\alpha_t^k\in[0,1]$. When an individual chooses a socialization level that is equal to 1, they keep their behavioral patterns pre-epidemic. If they choose a lower value, it means that they are taking higher non-pharmaceutical measures, such as increased social distancing indoors and/or outdoors, mask use, and shopping online instead of in person.
    \item Vaccination level: $\nu_t^k\geq0$, applied only in the susceptible state.
\end{itemize}

We define: $\balpha:= (\alpha_t^k)_{t\in[0,T], k\in[K]}$, $\balpha^k = (\alpha_t^k)_{t\in[0,T]}$. $\bnu$ and $\bnu^k$ are also defined similarly.
Socialization levels are Markovian functions, in other words, they are functions of the individuals' infection states. This means individuals choose their socialization levels depending on time and also whether they are in state $\sS, \sI,$ or $\sR$. Therefore, we can also write $\alpha_t^k= \big(\alpha_t^k (\sS), \alpha_t^k (\sI), \alpha_t^k (\sR)\big)$.

\subsection*{State Transitions}
The transitions of a representative individual in group $k$ between infection states are governed by \textit{controlled} continuous-time Markov chain with rates:
\begin{itemize}
    \item $\sS \rightarrow \sI$: Rate $\beta^k \alpha_t^k Z_t^k$, where $\beta^k$ represents the base transmission rate, $Z_t^k$ is the weighted average socialization level of infected individuals across all groups:
    \begin{equation*}
        Z_t^k = \sum_{l \in [K]} w(k, l) \bar{\alpha}_t^l(\sI)  p_t^{l}(\sI) m^l,
    \end{equation*}
    Here, $w(k,l) \in[0,1]$ gives the connection strength between groups $k$ and $l$, $\bar{\alpha}_t^l(\sI)$ gives the average socialization level of infected individuals in group $l$, $ p_t^{l}(\sI)$ is the proportion of infected individuals in group $l$, and $m^l$ is the group proportion. This formulation corresponds to the infinite population limit of the following finite individual formulation:
    \begin{equation*}
        Z_t^i = 1/N\sum_{j \in \{1,\dots, N\}} w(i,j) \alpha_t^j(\sI) \mathbf{1}_{\sI}(X_t^j)
    \end{equation*}
    as the number of individuals in the population, $N$, goes to infinity. $\mathbf{1}_{\sI}(X_t^j)$ represents the indicator function that is equal to 1 if the $j^{\rm th}$ individual is infected at time $t$. In this finite population formulation, we assume that the population of size $N$ is divided into $K$ groups and if individuals $i$ and $j$ are in groups \textsc{i} and \textsc{j}, they are connected to each other with connection strength $w(\textsc{i},\textsc{j})$. Therefore, the transition rate of an individual in group $k$ depends on (1) the base rate $\beta^k$, (2) the individual's own socialization level decision $\alpha_t^k$, and (3) the individual's interactions with other infected individuals across all the populations $Z_t^k$.
    \item $\sS \rightarrow \sR$: Rate $\kappa^k\nu_t^k$, where $\kappa^k$ represents the vaccination efficacy.
    \item $\sI \rightarrow \sR$: Rate $\gamma^k$, where $\gamma^k$ represents the recovery rate.
    \item $\sR \rightarrow \sS$: Rate $\eta^k$, where $\eta^k$ represents the waning of immunity rate.
\end{itemize}

A representative individual in group $k$ transitions from state $\sI$ to $\sR$ after an exponentially distributed time with rate $\gamma^k$. The transition rate from state $\sS$ to $\sI$ depends on the individual's own socialization level decision as well as the weighted average socialization level of infected individuals across all groups. This weighted average introduces interactions among the individuals in the model. A susceptible individual can choose a lower socialization level to slow down their transition to the infected state. However, this transition rate still depends on how other infected individuals behave. The transition rate from state $\sS$ to $\sR$ is based on the individual's own vaccination level decision. A more detailed discussion about the connection with the SIR model and mean field transitions can be found in the SI. 

\begin{figure}[h]
\begin{center}

\vspace{-0.2cm} 
\tikzset{every picture/.style={line width=0.5pt}}

\begin{tikzpicture}[x=0.65pt,y=0.6pt,yscale=-0.9,xscale=0.9]

\draw  [color={rgb, 255:red, 0; green, 0; blue, 0 }  ,draw opacity=1 ][fill={rgb, 255:red, 124; green, 180; blue, 251 }  ,fill opacity=1 ] (63,110.77) .. controls (63,105.37) and (67.37,101) .. (72.77,101) -- (108.23,101) .. controls (113.63,101) and (118,105.37) .. (118,110.77) -- (118,140.07) .. controls (118,145.47) and (113.63,149.84) .. (108.23,149.84) -- (72.77,149.84) .. controls (67.37,149.84) and (63,145.47) .. (63,140.07) -- cycle ;

\draw  [fill={rgb, 255:red, 184; green, 233; blue, 134 }  ,fill opacity=1 ] (365,109.77) .. controls (365,104.37) and (369.37,100) .. (374.77,100) -- (410.23,100) .. controls (415.63,100) and (420,104.37) .. (420,109.77) -- (420,139.07) .. controls (420,144.47) and (415.63,148.84) .. (410.23,148.84) -- (374.77,148.84) .. controls (369.37,148.84) and (365,144.47) .. (365,139.07) -- cycle ;

\draw  [fill={rgb, 255:red, 244; green, 190; blue, 102 }  ,fill opacity=1 ] (221,109.77) .. controls (221,104.37) and (225.37,100) .. (230.77,100) -- (266.23,100) .. controls (271.63,100) and (276,104.37) .. (276,109.77) -- (276,139.07) .. controls (276,144.47) and (271.63,148.84) .. (266.23,148.84) -- (230.77,148.84) .. controls (225.37,148.84) and (221,144.47) .. (221,139.07) -- cycle ;

\draw    (118,128.84) -- (220,128.84) ;
\draw [shift={(222,128.84)}, rotate = 180] [color={rgb, 255:red, 0; green, 0; blue, 0 }  ][line width=0.75]    (10.93,-3.29) .. controls (6.95,-1.4) and (3.31,-0.3) .. (0,0) .. controls (3.31,0.3) and (6.95,1.4) .. (10.93,3.29)   ;

\draw    (276,129.84) -- (364,129.84) ;
\draw [shift={(366,129.84)}, rotate = 180] [color={rgb, 255:red, 0; green, 0; blue, 0 }  ][line width=0.75]    (10.93,-3.29) .. controls (6.95,-1.4) and (3.31,-0.3) .. (0,0) .. controls (3.31,0.3) and (6.95,1.4) .. (10.93,3.29)   ;

\draw    (88,100.84) .. controls (162.63,53.08) and (321.4,56.81) .. (386.79,96.4) ;
\draw [shift={(387.77,97)}, rotate = 211.8] [color={rgb, 255:red, 0; green, 0; blue, 0 }  ][line width=0.75]    (10.93,-3.29) .. controls (6.95,-1.4) and (3.31,-0.3) .. (0,0) .. controls (3.31,0.3) and (6.95,1.4) .. (10.93,3.29)   ;

\draw    (88,153.84) .. controls (162.63,203.08) and (321.4,206.81) .. (386.79,150.4) ;
\draw [shift={(82.77,151)}, rotate = 25.8] [color={rgb, 255:red, 0; green, 0; blue, 0 }  ][line width=0.75]    (10.93,-3.29) .. controls (6.95,-1.4) and (3.31,-0.3) .. (0,0) .. controls (3.31,0.3) and (6.95,1.4) .. (10.93,3.29)   ;

\draw (82,115) node [anchor=north west][inner sep=0.75pt]  [font=\Large] [align=left] {$\sS$};
\draw (244,115) node [anchor=north west][inner sep=0.75pt]  [font=\Large] [align=left] {$\sI$};
\draw (384,115) node [anchor=north west][inner sep=0.75pt]  [font=\Large] [align=left] {$\sR$};
\draw (125,102) node [anchor=north west][inner sep=0.75pt]   [align=left] {$\displaystyle \beta^k\alpha_t^k(\sS) Z_t^k$};;
\draw (305,102) node [anchor=north west][inner sep=0.75pt]   [align=left] {$\displaystyle \gamma^k $};
\draw (218,40) node [anchor=north west][inner sep=0.75pt]   [align=left]  {$\displaystyle \kappa^k \nu^k_t $};
\draw (235,170) node [anchor=north west][inner sep=0.75pt]   [align=left]  {$\displaystyle \eta^k$};

\end{tikzpicture}
\end{center}
\vspace{-0.4cm} 
\caption{\small Transition rates between states of the representative individual $k$.} 
\vspace{-0.3cm} 
\label{fig:transitionrates_mfg}
\end{figure}
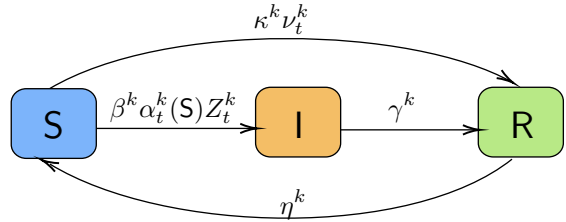

\subsection*{Public Health Policies}

Public health recommendations influence individual behavior through two primary mechanisms:
\vskip1mm
\noindent \textit{Social Distancing Guidelines:} These are encoded as time-dependent functions depending on the infection state and are denoted by $\lambda_t^{k,e}\in[0,1]$ for $t\in[0,T]$, $k\in[K]$, $e\in\{\sS, \sI, \sR\}$. A higher value of $\lambda$ indicates no restrictions, while lower values represent stricter non-pharmaceutical policies (e.g., lockdowns, mask mandates, quarantine). We stress that individuals do not have to follow these policies \textit{exactly} even if they are followers. In other words, the socialization level that is chosen by them does not have to be equal to the social distancing guidelines. Instead, it will be chosen by the individuals such that they settle in a Nash equilibrium. This means that each of them will choose a socialization level such that if they change their behavior (while everyone else keeps their behavior the same), they will have a higher personal cost. 

\vskip1mm
\noindent \textit{Vaccination Cost Modulation:} The regulator can indirectly influence vaccination uptake by adjusting the economic burden associated with vaccination and by affecting the perceived risks from vaccination uptake, represented by the group-specific coefficient $c_\nu^k>0$. For example, reducing logistical barriers or offering financial incentives can lower $c_\nu^k$, especially for low-income groups, thereby encouraging higher vaccination rates. These two policies allow public health authorities to shape disease dynamics by targeting behavioral incentives across the heterogeneous population.

\subsection*{Individual Objectives}
Each individual minimizes their own cost functional over time $[0,T]$, which depends on their group.
\vskip1mm
\noindent \textit{Authority followers}, $k \in \{\lf, \mf, \hf \}$:
These individuals try to align their socialization behavior with public health policies. Their cost increases when they deviate from social distancing guidelines, $\lambda_t^{k,e}$. This captures both compliance costs (e.g., fines for over-socializing) and opportunity costs (e.g., missed activities due to under-socializing). Mathematically, the representative authority follower individual would like to minimize the following cost functional $J^k(\balpha^k, \bnu^k; \bZ)$ by choosing their socialization and vaccination levels:
\begin{equation}
\begin{aligned}
\label{eq:obj_follow}
        J^k(\balpha^k, \bnu^k; \bZ)&=  \mathbb{E}\Big[\int_0^T \big[\big(c^k_{\lambda} (\lambda_t^{k,\sS} -\alpha_t^k)^2 + c_{\nu}^k (\nu_t^k)^2 \big)\mathbf{1}_{\sS}(X^k_t) \\
    &\qquad+ \big((\lambda_t^{k,\sI} -\alpha_t^k)^2 + c_{\sI}^k\big)\mathbf{1}_{\sI}(X^k_t)+ (\lambda_t^{k,\sR} -\alpha_t^k)^2\mathbf{1}_{\sR}(X^k_t)\big] dt \Big],
\end{aligned}
\end{equation}
where $\mathbf{1}_e(X^k_t)$ represents the indicator function for $e\in\{\sS, \sI, \sR\}$ that is equal to 1 if $X^k_t=e$ and 0 otherwise. The expression $(\lambda_t^{k, e}-\alpha_t^k)^2$ aims to capture the cost that follower individuals experience when they choose socialization levels $\alpha_t^k$ that deviate from the social distancing guidelines $\lambda_t^{k,e}$.

\vskip1mm
\noindent \textit{Authority indifferents}, $k \in \{\li, \mi, \hi\}$: These individuals do not adhere to the official guidelines and instead aim to maximize social interactions. This means that if they are susceptible or recovered, they want to choose socialization levels close to $1$ (i.e., the maximum socialization level) and if they are infected they would like to choose a socialization level that is slightly lower than $1$ (which will be denoted by $\xi_{\sI}$ and termed as \textit{sickness related intrinsic socialization}) to take into account any personal desire to decrease interactions during infection. Therefore, their cost function penalizes any reduction from maximum socialization (or any deviation from the sickness related intrinsic socialization), reflecting personal or economic losses from limited contact. 

Mathematically, the representative authority indifferent individual would like to minimize the following cost functional $J^k(\balpha^k, \bnu^k; \bZ)$ by choosing their socialization and vaccination levels:
\begin{equation}
\begin{aligned}
\label{eq:obj_indiff}
    J^k(\balpha^k, \bnu^k; \bZ) &=  \mathbb{E}\Big[\int_0^T \big[\big(c^k_{\lambda} (1 -\alpha_t^k)^2 + c_{\nu}^k (\nu_t^k)^2 \big)\mathbf{1}_{\sS}(X^k_t) \\
    &\qquad+ \big((\xi_{\sI} -\alpha_t^k)^2 + c_{\sI}^k\big)\mathbf{1}_{\sI}(X^k_t)+ (1 -\alpha_t^k)^2\mathbf{1}_{\sR}(X^k_t)\big] dt \Big]. 
\end{aligned}
\end{equation}
Susceptible and recovered individuals incur a cost if their socialization level, $\alpha_t^k$, deviates from $1$, due to the cost terms $(1-\alpha_t^k)^2$; while the infected individuals incur a cost if their socialization level, $\alpha_t^k$, deviates from the sickness related intrinsic socialization, $\xi_{\sI}$ given by $(\xi_{\sI}-\alpha_t^k)^2$.

As shown in~\eqref{eq:obj_follow} and~\eqref{eq:obj_indiff}, authority followers and indifferents also incur:
\begin{itemize}
    \item Vaccination costs for susceptible individuals, weighted by $c_\nu^k$ , that may vary by economic status and authority perception (e.g., higher for low-income groups due to access barriers) and will depend on their vaccination level controls, $\nu_t^k$. 
    \item Treatment costs for infected individuals, represented by $c_{\sI}^k$, which also vary by economic status (e.g., due to lack of insurance). 
\end{itemize}

In addition, model parameter $c_\lambda^k$ weighs the importance a susceptible individual gives to following social distancing guidelines versus socialization. These cost functional structures allow us to capture how trust in authority and economic constraints jointly shape epidemic-related behaviors. We stress that in our mathematical model we use group-dependent parameters for maximum generality of the model.

\subsection*{Equilibrium Notion}
$(\hat{\balpha}, \hat{\bnu})$ is a multi-population mean field game Nash equilibrium socialization and vaccination level control couple if for any $\balpha^k$, $\bnu^k$, we have
\begin{equation*}
    J^k(\balpha^k, \bnu^k; \hat{\bZ})\geq J^k(\hat{\balpha}^k, \hat{\bnu}^k; \hat{\bZ}),  
\end{equation*}
for all $k\in[K]$ and where  $\hat{\bZ} = (\hat{Z}_t^k)_{t\in[0,T], k\in[K]}$ and $\hat{Z}_t^k = \sum_{l \in [K]} w(k, l) \bar{\hat{\alpha}}_t^l(\sI) p_t^{l}(\sI)m^l$.
\vskip2mm
Intuitively, this means that none of the individuals can decrease their individual costs by changing their socialization and vaccination levels while the other individuals keep their behavior the same. In this way, Nash equilibrium defines a stable equilibrium where no individual has any incentive to change their behavior. The multi-population mean field game Nash equilibrium satisfies the following two conditions:
\begin{itemize}
    \item \textit{Optimality:} For each representative individual in group $k\in[K]$, $\hat{\balpha}^k$ should be the \textit{best response} control given the weighted average socialization levels of infected individuals across all groups, $\hat{\bZ}$:
    \begin{equation*}
        (\hat{\balpha}^k, \hat{\bnu}^k) = \argmin_{\balpha^k, \bnu^k} J^k({\balpha}^k, {\bnu}^k; \hat{\bZ}).
    \end{equation*}
    This implies that each representative individual chooses socialization and vaccination levels that minimize their own cost function given the other individuals' behaviors.
    \item \textit{Fixed Point:} Weighted average socialization level of infected individuals across all groups, $\hat{\bZ}^k$ for all $k\in[K]$, should be determined by the best response controls of the representative individuals:
    \begin{equation*}
        \hat{Z}_t^k = \sum_{l \in [K]} w(k, l) \bar{\hat{\alpha}}_t^l(\sI) p_t^{l}(\sI)m^l.
    \end{equation*}
    This means that at the equilibrium \textit{every individual} chooses socialization and vaccination levels that minimize their own cost functions. Assuming every individual in the same group is homogeneous, we calculate the weighted average socialization level of infected individuals across all groups by using the best response socialization levels.
\end{itemize}

\subsection*{Survey}
We conducted a two-wave online survey in Illinois between November 2024 and March 2025, recruiting respondents through the Dynata panel~\cite{Dynata}  to report on their respiratory illness-related choices over the past year. Informed consent was obtained for all participants, and the study protocol received an IRB exemption determination (reference number IRB24-1168). This survey allowed us identify factors influencing health-seeking behavior, willingness to get tested or vaccinated, and health history across various sociodemographic groups. We incorporated survey results into our model parametrization as described in the SI.

\section*{Results and Discussion}

\subsection*{Equilibrium Characterization}

Under continuity assumptions on $\lambda^{k,e}_t$ over time, the equilibrium socialization and vaccination levels in the groups are characterized as follows:

\paragraph{Equilibrium Socialization Levels:}
\begin{equation*}
\begin{aligned}
\hat{\alpha}^k_t(\sS) &= 
\begin{cases}
\lambda^{k,\sS}_t + \frac{\beta_k Z_t^k (u_t^k(\sS) - u_t^k(\sI))}{2c^k_\lambda}, & \text{if } k \in \{\lf, \mf, \hf\}, \\[2mm]
1 + \frac{\beta_k Z_t^k (u_t^k(\sS) - u_t^k(\sI))}{2c^k_\lambda}, & \text{if } k \in \{\li, \mi, \hi\},
\end{cases}\\
\hat{\alpha}^k_t(\sI) &= 
\begin{cases}
\lambda^{k,\sI}_t , & \text{if } k \in \{\lf, \mf, \hf\}, \\
\xi_{\sI}, & \text{if } k \in \{\li, \mi, \hi\},
\end{cases}\\
\hat{\alpha}^k_t(\sR) &= 
\begin{cases}
\lambda^{k,\sR}_t , & \text{if } k \in \{\lf, \mf, \hf\}, \\
1, & \text{if } k \in \{\li, \mi, \hi\},
\end{cases}
\end{aligned}
\end{equation*}
where $Z_t^k=\sum_{l \in [K]} w(k, l) \bar{\hat{\alpha}}_t^l(\sI) p_t^{l}(\sI)m^l$,
\paragraph{Equilibrium Vaccination Levels:}
\begin{equation*}
\hat{\nu}^k_t = \frac{\kappa^k \big(u_t^k(\sS) - u_t^k(\sR)\big)}{2c_\nu^k},
\end{equation*}
where the expected minimum cost of the representative individual in group $k$ who is in state $e$ at time $t$ ($(u_t^k(e))_{t\in[0,T], k\in [K], e\in\{\sS, \sI, \sR\}}$), and the proportion of individuals who are in state $e$ in group $k$ at time $t$ ($(p_t^k(e))_{t\in[0,T], k\in [K], e\in\{\sS, \sI, \sR\}}$) satisfy the following coupled forward-backward differential equation (FBODE) system:
\begin{align*}
    \dot p^k_t(\sS) &= -\beta^k \hat{\alpha}^{k}_t(\sS) Z_t^k p_t^k(\sS) - \kappa^k \hat{\nu}_t^kp_t^k(\sS) + \eta^k p_t^k(\sR) ,\\[1mm]
    \dot p^k_t(\sI) &= \beta^k \hat{\alpha}^{k}_t(\sS) Z_t^k p_t^k(\sS) - \gamma^k p_t^k(\sI),\\[1mm]
    \dot p^k_t(\sR) &= \gamma^k p_t^k(\sI) +\kappa^k \hat{\nu}_t^kp_t^k(\sS) - \eta^k p_t^k(\sR),
    \\[1mm]
    \dot u^{k_1}_t (\sS)&= \beta^{k_1} \hat\alpha^{k_1}_t(\sS)Z^{k_1}_t\big(u^{k_1}_t(\sS) - u^{k_1}_t( \sI)\big) \\
    &\hskip5mm+ \kappa^{k_1} \hat{\nu}_t^{k_1} \big(u^{k_1}_t(\sS) - u^{k_1}_t( \sR)\big) 
    -c^{k_1}_\lambda \big(\lambda^{{k_1}, \sS}_t-\hat\alpha^{k_1}_t(\sS)\big)^2 - c_\nu^{k_1} (\hat{\nu}_t^{k_1})^2,
    \\[1mm]
    \dot u^{k_2}_t (\sS)&= \beta^{k_2} \hat\alpha^{k_2}_t(\sS)Z^{k_2}_t\big(u^{k_2}_t(\sS) - u^{k_2}_t( \sI)\big) \\
    &\hskip5mm+ \kappa^{k_2} \hat{\nu}_t^{k_2} \big(u^{k_2}_t(\sS) - u^{k_2}_t( \sR)\big) 
    -c^{k_2}_\lambda \big(1-\hat\alpha^{k_2}_t(\sS)\big)^2 - c_\nu^{k_2} (\hat{\nu}_t^{k_2})^2,
    \\[1mm]
    \dot u^k_t (\sI)&= \gamma^k \big(u^k_t(\sI) - u^k_t( \sR)\big) - c^k_{\sI},
    \\[1mm]
    \dot u^k_t (\sR)&= \eta^k (u_t^k(\sR)-u_t^k(\sS)),
    \\[1mm]
     u^k_T(e)&= 0,\quad p^k_0(e) = p^k(e),\quad \forall e\in \{\sS, \sI, \sR\},  \\[1mm]
     Z^k_t &= \sum_{l \in [K]} w(k, l) \EE[\hat{\alpha}_t^l(\sI)] p_t^l(\sI) m^l, \quad\forall k \in [K],\, \forall t\in[0,T], \forall k_1\in\{\lf,\mf, \hf\},\: \forall k_2\in\{\li, \mi, \hi\}.
\end{align*}
The above result indicates that if the FBODE system is solved, then its solution $\big(u_t^k(e),p_t^k(e)\big)_{t\in[0,T], k\in[K], e\in\{\sS, \sI, \sR\}}$ can be used to calculate the equilibrium socialization and vaccination levels. In order to numerically solve this FBODE system, a fixed point point algorithm is implemented. The full equilibrium characterization theorem, its intuitive details and proof, and the existence and uniqueness of the multi-population mean field Nash equilibrium theorem as well as its proof are presented in the SI.

\subsection*{Numerical Algorithm}
To numerically compute the multi-population mean field Nash equilibrium, we solve the FBODE system above characterizing the equilibrium. We discretize the time using Euler scheme to write discrete-time dynamics and obtain the equilibrium by implementing a fixed-point algorithm to iteratively update the forward and backward components of the discretized ODE system until convergence. More details and the detailed pseudo-code of the algorithm is provided in the SI.

\subsection*{Experiment Results and Observations}
We define three metrics to analyze our results. \textit{Peak difference}, which quantifies the difference between the peak of the proportion of infected individuals or the percentage difference between the peak in vaccination or socialization levels in the same group \textit{under two different experiment settings}. Mathematically, the infection peak difference in group $k$ is defined as 
$|\max_t p_t^{k, \text{exp 1}}(\sI) - \max_t p_t^{k, \text{exp 2}} (\sI)|$. On the other hand, the (susceptible) socialization and vaccination peak difference in group $k$ are defined respectively as 
$$|\min_t \hat{\alpha}_t^{k, \text{exp 1}}(\sS) - \min_t \hat{\alpha}_t^{k, \text{exp 2}}(\sS)|$$ 
and 
$$|\max_t \hat\nu_t^{k, \text{exp 1}} - \max_t \hat\nu_t^{k, \text{exp 2}}|.$$
\textit{Peak time span} refers to the time step difference between the earliest and latest peak across all groups under the same policy, that is mathematically defined as
$$\max_{k}\argmax_{t} p_t^k(\sI) - \min_{k}\argmax_{t} p_t^k(\sI)$$
\textit{Group disparity} refers to the maximum difference between the proportions of infected individuals or between socialization levels or between vaccination levels in two different groups over time under the same policies. Mathematically, the infection group disparity between group $k$ and group $l$ is given by $\max_t \big(p_t^{k}(\sI)-p_t^l(\sI)\big)$. Similarly, the (susceptible) socialization and vaccination group disparity between group $k$ and group $l$ under the same experiment setting are defined as $\max_t \big(\alpha_t^{k}(\sS)-\alpha_t^l(\sS)\big)$ and $\max_t \big(\nu_t^{k}-\nu_t^l\big)$, respectively.

\subsubsection*{Effects of Social Distancing Guidelines}
We compare two social distancing policies to a baseline guideline:
\begin{itemize}
    \item \textbf{Permissive (baseline):} This is the baseline policy where the public health authority does not impose any strict social distancing or other non-pharmaceutical interventions. They just recommend more caution than usual. Mathematically, we set $\lambda^{k,e}_t=0.9$ for all $t\in[0,T]$, $k\in\{\lf, \mf, \hf\}$ and $e\in\{\sS, \sI, \sR\}$.
    \item \textbf{Adaptive:} In the adaptive social distancing guidelines, the public health authority recommends a stricter intervention for individuals who are \textit{infected}. This can be implemented by suggesting preventive measures such as quarantine or mask-use for the infected individuals. Mathematically, we keep $\lambda^{k,e}_t=0.9$ for $e\in\{\sS, \sR\}$ and set $\lambda^{k,\sI}_t=0.6$ for all $t\in[0,T]$, $k\in\{\lf, \mf, \hf\}$.
    \item \textbf{Strict:} In the strict social distancing guidelines, the public health authority recommends stricter intervention for \textit{all} individuals. Mathematically, we set $\lambda^{k,e}_t=0.6$ for all $t\in[0,T]$, $k\in\{\lf, \mf, \hf\}$ and $e\in\{\sS, \sI, \sR\}$.
\end{itemize}
Figure~\ref{fig:policy_comparison_per_ada} shows that, as expected, the adaptive guideline reduces the proportion of infected individuals across all economic groups compared to the baseline, indicating that a more targeted restrictive policy on infected individuals is effective in lowering overall incidence. Under both policies, the ordering of infection proportion across groups is preserved and follows $\li > \lf \approx \mi > \mf \approx \hi > \hf$, suggesting that lower–income indifferent individuals consistently take the heaviest infection burden while high-income followers have lower infections. Interestingly, we found that being in the follower group partly offsets the disadvantage of being in a lower economic group. 

These results also show that all susceptible individuals reduce their socialization to protect themselves from being infected. In particular, followers reduce their socialization below the recommended level, $(\lambda_t^{k, \sS})_{t\in[0,T]}$ while indifferents stayed below their maximum socialization level $1$. Individuals in the lower–income groups restrict themselves the most at the peaks under both baseline permissive and adaptive guidelines. Strikingly, susceptible followers socialize more under the adaptive guideline, as infected followers sharply reduce their socialization levels. Susceptible individuals then feel better protected and partially \textit{free ride} on this protective behavior of infected follower individuals, engaging in higher socialization: the largest socialization peak difference occurs in $\lf$ (0.0236 above the baseline), while the smallest occurs in $\hi$ (0.0155 above the baseline). Across both policies, vaccination takes the ordered pattern $ \hf > \mf > \lf > \hi > \mi > \li$, which shows that followers choose to vaccinate more than the indifferent individuals as well as higher-income individuals choose to vaccinate more than the lower-income individuals. 
We also emphasize that under the adaptive guideline, the perceived safety from infected follower individuals reduced socialization levels, further decreases the vaccination level decisions of susceptible individuals: the largest vaccination peak difference occurs in $\hf$ (0.0124 below the baseline), while the smallest occurs in $\li$ (0.0075 below the baseline).

Figure~\ref{fig:policy_comparison_per_str} compares the baseline permissive guideline with the strict guideline that recommends stronger non-pharmaceutical restrictions uniformly across all infection states. Under the strict guideline, proportion of infected curves are substantially flattened for every group, with a lower peak infection at a delayed time in comparison to the permissive benchmark. Interestingly, susceptible followers markedly reduce their socialization levels under strict policy due to their desire to follow the more restrictive guidelines. This in turn incentivizes the followers to choose lower vaccination levels to the point that even falls below those of the indifferents, that the vaccination ordering under strict policy becomes $\hi > \mi \approx \li > \hf > \mf \approx \lf$. On imposing the strict guideline, the largest vaccination peak difference occurs in $\hf$ (0.0328 below the baseline), and the smallest occurs in $\li$ (0.0134 below the baseline). This clearly shows a trade-off in the decision-making of susceptible individuals while choosing different preventive behaviors, i.e. social-distancing vs vaccination. On the other hand, we observe that susceptible indifferent individuals enjoy a higher socialization level under the strict guidelines in comparison to baseline permissive guidelines. This again shows a \textit{free-rider} behavior of susceptible indifferent individuals where they can keep their socialization levels closer to the maximum levels due to the strict preventative measures taken by the follower individuals: the socialization peak differences are 0.0340, 0.0322 and 0.0251 for $\li, \mi, \hi$.

We then jointly quantify how these three guidelines shape cross–group disparities. The Low–Indifferent ($\li$) group consistently exhibits the highest infection proportion, and the largest infection group disparity is observed between $\li$ and High–Follower ($\hf$) groups. These $\li$–$\hf$ infection group disparities are 3.859\%, 2.835\%, and 3.853\% under the permissive, adaptive, and strict guidelines respectively. The adaptive guideline most effectively narrows the group disparity in infection proportions across authority perception and economic groups. Moreover, Figure~\ref{fig:peak_comp} shows that as guidelines become stricter, infection peaks are lowered and peak times become more spread out across groups. In particular, the largest infection peak differences are 4.958\% ($\li$, adaptive) and 9.409\% ($\lf$, strict), and the smallest ones are 4.128\% ($\hf$, adaptive) and 6.874\% ($\hi$, strict) from the baseline guideline. The peak time spans are between $\li$ and $\hf$, and valued at 1.9, 2.4 and 4.2 time steps for the permissive, adaptive and strict guidelines. Turning to socialization and vaccination behavior comparisons, the largest $\hf$–$\lf$ socialization group disparities are $0.0283$, $0.0214$, and $0.0198$ under the permissive, adaptive, and strict guidelines, and the respective $\hi$–$\li$ socialization group disparities are $0.0259$, $0.0201$, and $0.0161$. The largest vaccination group disparities are between $\hf$–$\li$ under the permissive (0.0139) and adaptive (0.0091) guidelines, and between $\hi$–$\lf$ under the strict (0.0086) guideline. Overall, as guidelines become stricter, they are more effective in compressing differences in socialization and vaccination behaviors. We also remark that, under our experiment setting, we observe the same trend when peak difference is used to quantify cross-group gaps.

\begin{figure}[h]
    \centering
    \includegraphics[width=0.7\textwidth]{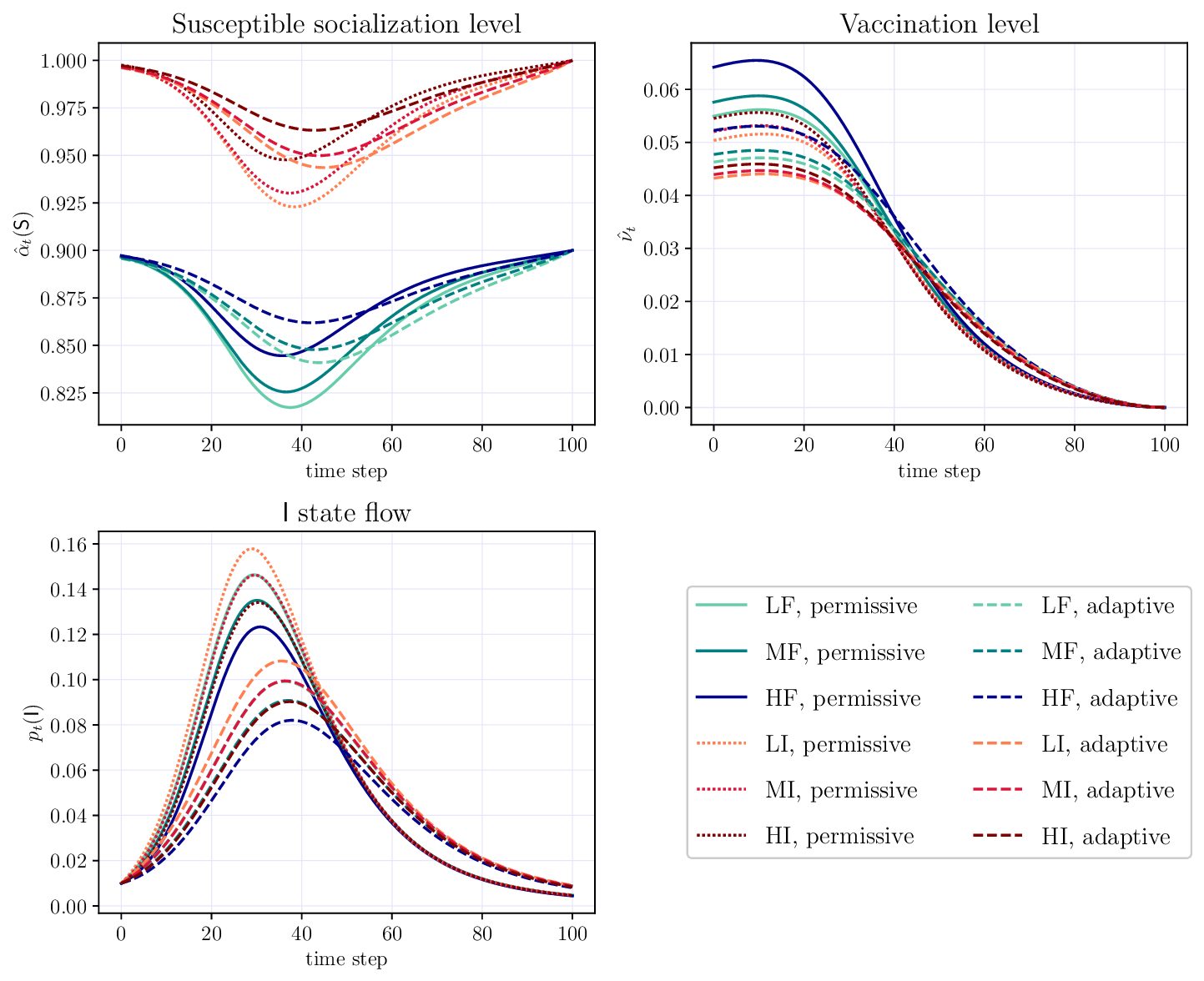}
    \captionsetup{font=footnotesize}
    \caption{
        Comparison of epidemics mitigation effects under two guidelines. \textbf{Permissive}: $\lambda_t^{k, \sS} = 0.9, \lambda_t^{k, \sI} = 0.9, \lambda_t^{k, \sR} = 0.9$ where the results are shown with solid lines for follower groups and with dotted lines for indifferent groups to increase the readability; \textbf{Adaptive}: $\lambda_t^{k, \sS} = 0.9, \lambda_t^{k, \sI} = 0.6, \lambda_t^{k, \sR} = 0.9$, for all $t \in [0,T]$, $k \in \{\lf, \mf, \hf, \li, \mi, \hi\}$ where the results are shown with dashed lines for all groups.}
    \label{fig:policy_comparison_per_ada}
\end{figure}

\begin{figure}[h]
    \centering
    \includegraphics[width=0.7\textwidth]{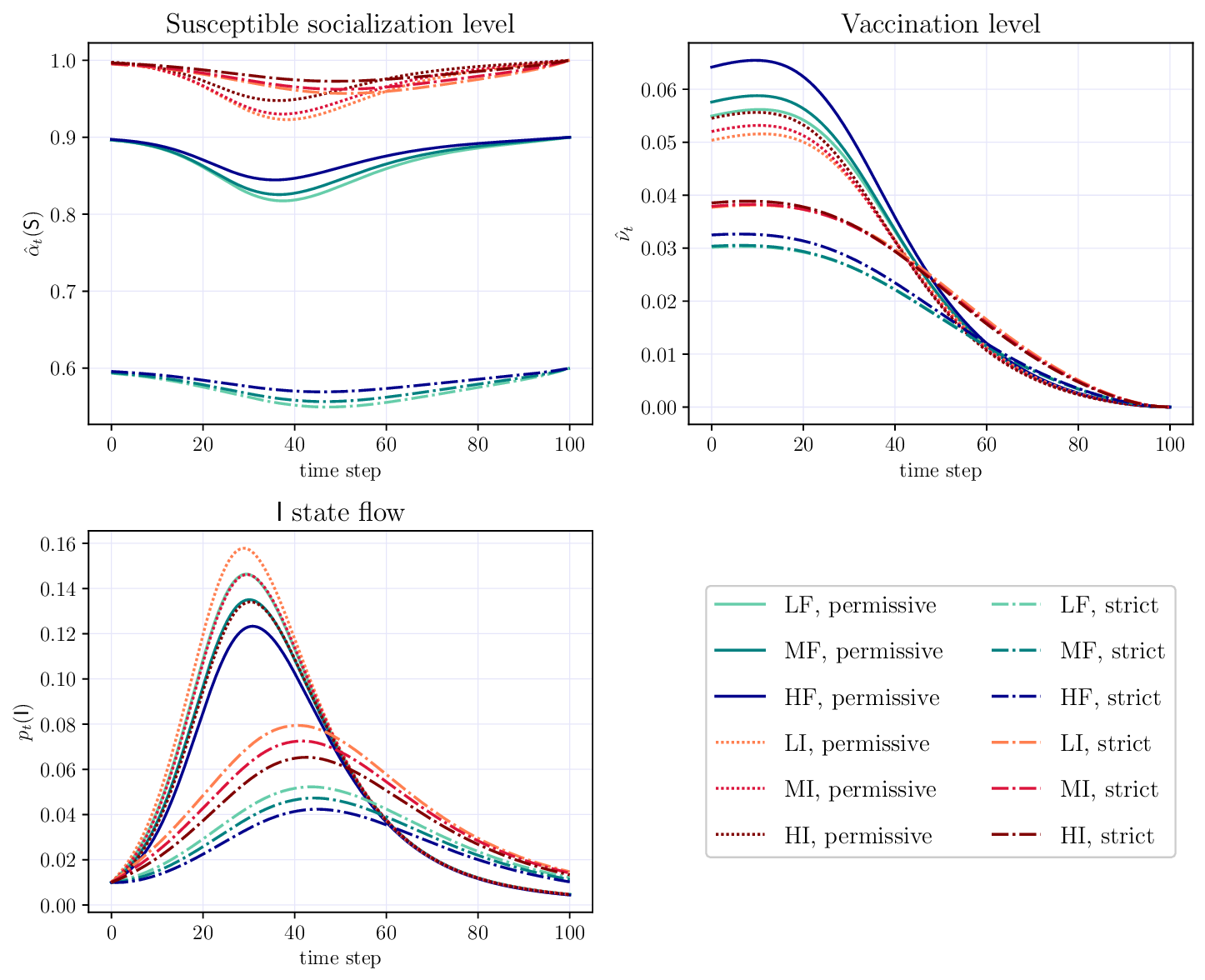}
    \captionsetup{font=footnotesize}
    \caption{
        Comparison of epidemics mitigation effects under two guidelines. \textbf{Permissive}: $\lambda_t^{k, \sS} = 0.9, \lambda_t^{k, \sI} = 0.9, \lambda_t^{k, \sR} = 0.9$ where the results are shown with solid lines for follower groups and with dotted lines for indifferent groups to increase the readability; \textbf{Strict}: $\lambda_t^{k, \sS} = 0.6, \lambda_t^{k, \sI} = 0.6, \lambda_t^{k, \sR} = 0.6$, for all $t \in [0,T]$, $k \in \{\lf, \mf, \hf, \li, \mi, \hi\}$ where the results are shown with dotted-dashed lines for all groups.}
    \label{fig:policy_comparison_per_str}
\end{figure}

\begin{figure}[h]
    \centering
    \includegraphics[width=0.7\textwidth]{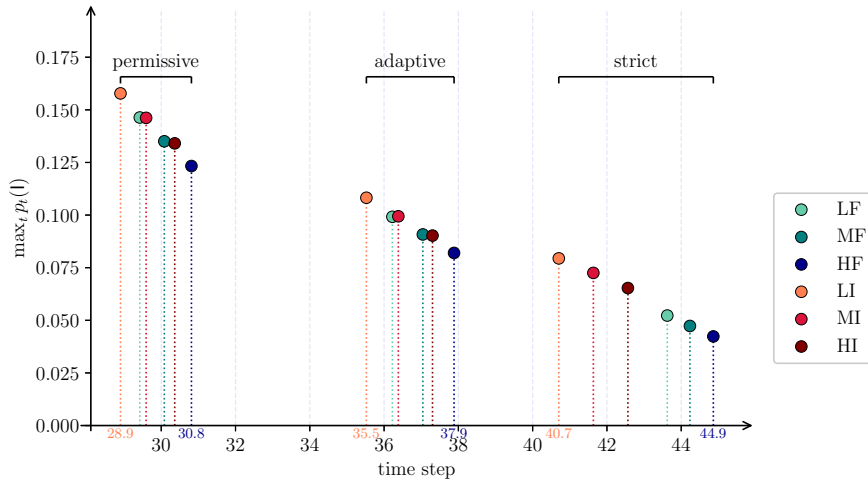}
    \captionsetup{font=footnotesize}
    \caption{Joint comparison of infection peak times and infection peak proportions across guidelines and groups.}
    \label{fig:peak_comp}
\end{figure}

\subsubsection*{Effects of Vaccination Costs}
In Figure~\ref{fig:vax_cost_comparison}, we compare the group-adaptive vaccination cost regime (where $c^{\lf}_{\nu} = 1.4, c^{\mf}_{\nu} = 1.2, c^{\hf}_{\nu} = 0.8, c^{\li}_{\nu} = 1.6, c^{\mi}_{\nu} = 1.4, c^{\hi}_{\nu} = 1.0$) with a uniformly low vaccination cost regime (where $c^{k}_{\nu} =0.8$, for all $k\in[K]$). The group-adaptive parameters are estimated from survey data and represent the increased cost of vaccination due to the lower economic status of the individuals as well as the differences in the perceived risk of vaccination between the authority follower and indifferent individuals. We find that overall while lowering vaccination costs increases vaccination levels across most groups (except for $\hf$, as shown in the difference between the continuous and dashed lines), its impact on infection reduction is limited compared to even milder social-distancing guidelines. Consequently, interventions targeting only reducing vaccination costs may operate at much greater expense to achieve desired outcomes unless complemented by socialization guidelines. 
    
The infection peak differences between the two vaccination cost regimes are 0.791\%, 0.644\%, and 0.375\% for $\lf$, $\mf$, and $\hf$, and 0.922\%, 0.775\%, and 0.517\% for $\li$, $\mi$, and $\hi$. Higher-income groups benefit less from the uniformly low-cost regime since it provides the smallest decrease in their original vaccination cost. We also see the infection peak is suppressed more among indifferent individuals. The largest infection group disparity is between $\li$ (higher) and $\hf$ (lower) under both regimes, at 3.859\% (adaptive) and 3.372\% (low-cost) which shows that the low vaccination cost regime decreases the group disparities. Vaccination levels of individuals are substantially reordered under the low cost regime, where we observe the following ordering. Originally the vaccination ordering was as $\hf > \mf > \lf \approx \hi >\mi>\li$ which shows higher vaccination levels for the followers and high-economic status individuals. In the low cost regime, the ordering becomes $\li > \lf > \mi > \mf > \hi > \hf$, with mid- and low-income groups increasing their vaccination levels the most by taking taking advantage of lowered costs. We find that indifferents in the new regime perceive a lower vaccination cost and have higher vaccination levels than followers within the same economic group. Only individuals in $\hf$ group decrease their vaccination levels, and this is due to fact that they can take advantage of the increased vaccination levels in the other groups. Specifically, the largest vaccination peak difference occurs in $\li$ (0.0489 above the adaptive regime) and the smallest one occurs in $\hf$ (0.0012 below the adaptive regime). In both regimes, the largest vaccination group disparity is between $\hf$ and $\li$, but $\hf$ vaccinates more under adaptive costs and less under low costs, where the group disparities are $0.0139$ and $0.0366$ respectively. Overall, a uniformly low vaccination cost narrows infection group disparities but increases the vaccination group disparities. Due to higher vaccination levels, individuals in all groups can slightly increase their socialization levels.

\begin{figure}[h]
    \centering
    \includegraphics[width=0.7\textwidth]{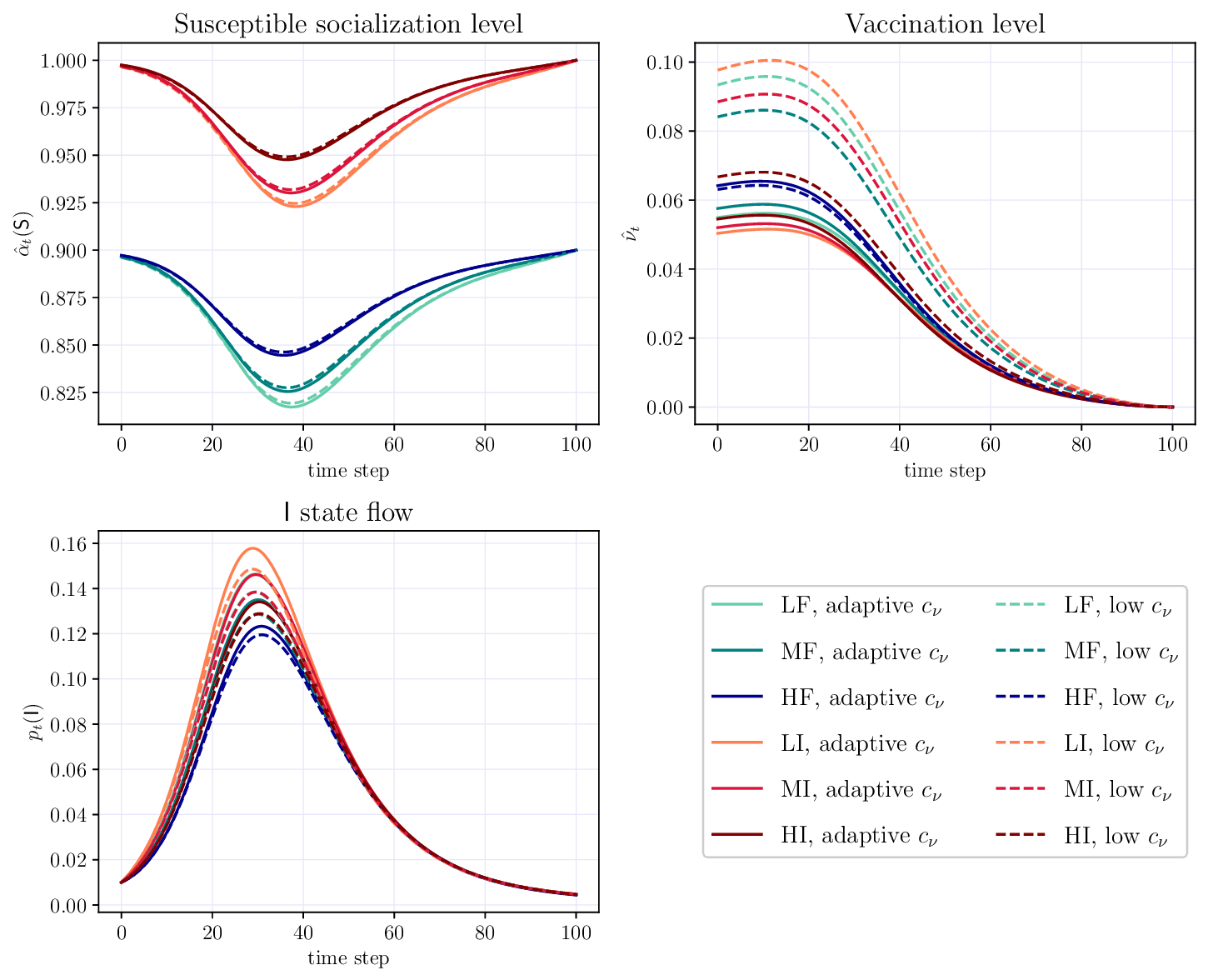}
    \captionsetup{font=footnotesize}
    \caption{
        Comparison of epidemics mitigation effects under two vaccination cost regimes. \textbf{Adaptive vaccination cost}: $
    c^{\text{LF}}_{\nu} = 1.4, c^{\text{MF}}_{\nu} = 1.2, c^{\text{HF}}_{\nu} = 0.8, c^{\text{LI}}_{\nu} = 1.6, c^{\text{MI}}_{\nu} = 1.4, c^{\text{HI}}_{\nu} = 1.0$ where the results are shown with solid lines for all groups; \textbf{Low vaccination cost}: $c^{\text{LF}}_{\nu} = c^{\text{MF}}_{\nu} = c^{\text{HF}}_{\nu} = c^{\text{LI}}_{\nu} =  c^{\text{MI}}_{\nu} = c^{\text{HI}}_{\nu} = 0.8$ where the results are shown with dashed lines for all groups.}
    \label{fig:vax_cost_comparison}
\end{figure}

\subsubsection*{Effects of Authority Perception} 
Figure~\ref{fig:mixed_followers_comparison} compares two population structures under the permissive social-distancing guideline. In the mixed-perception population, about $52\%$ of individuals are authority indifferent individuals; in the all-follower population, everyone is an authority follower and individuals differ only by economic status. For parameters that originally vary by authority perception in the mixed-perception case (network connections and $c_{\nu}$), we use their averages when constructing the all-follower regime. 

Under the all-follower regime, infection spread is markedly reduced: infection peak differences are 2.323\%, 2.179\% and 2.012\% for $\lf$, $\mf$ and $\hf$, with low-income groups benefiting the most. In this environment, follower individuals enjoy higher socialization levels and lower vaccination levels given a stronger population-level prevention: the socialization peak differences are 0.0121, 0.0114, 0.0088 for $\lf$, $\mf$, $\hf$; and the vaccination peak differences are 0.0067, 0.0079, 0.0112 respectively. Authority perception thus emerges as a dominant factor in epidemic mitigation: populations composed entirely of authority followers exhibit strong mitigation even under permissive social-distancing guidelines, whereas mixed-perception populations experience higher infection rates under the same social distancing guidelines. This finding underscores the importance of trust-building measures—such as transparent communication, community engagement, and consistent messaging—in epidemic preparedness. 

The largest infection and vaccination group disparities are between $\li$ and $\hf$ in the mixed-perception population ($3.859\%$ and $0.0139$), and between $\mathsf{L}(\mathsf{F})$ and $\mathsf{H}(\mathsf{F})$ in the all-follower regime ($2.228\%$ and $0.0048$). Respectively, socialization group disparities between low- and high-income followers are $0.0282$ and $0.0247$. Overall, the all-follower regime narrows group disparities in infection, vaccination, and socialization outcomes.

\begin{figure}[h]
    \centering
    \includegraphics[width=0.7\textwidth]{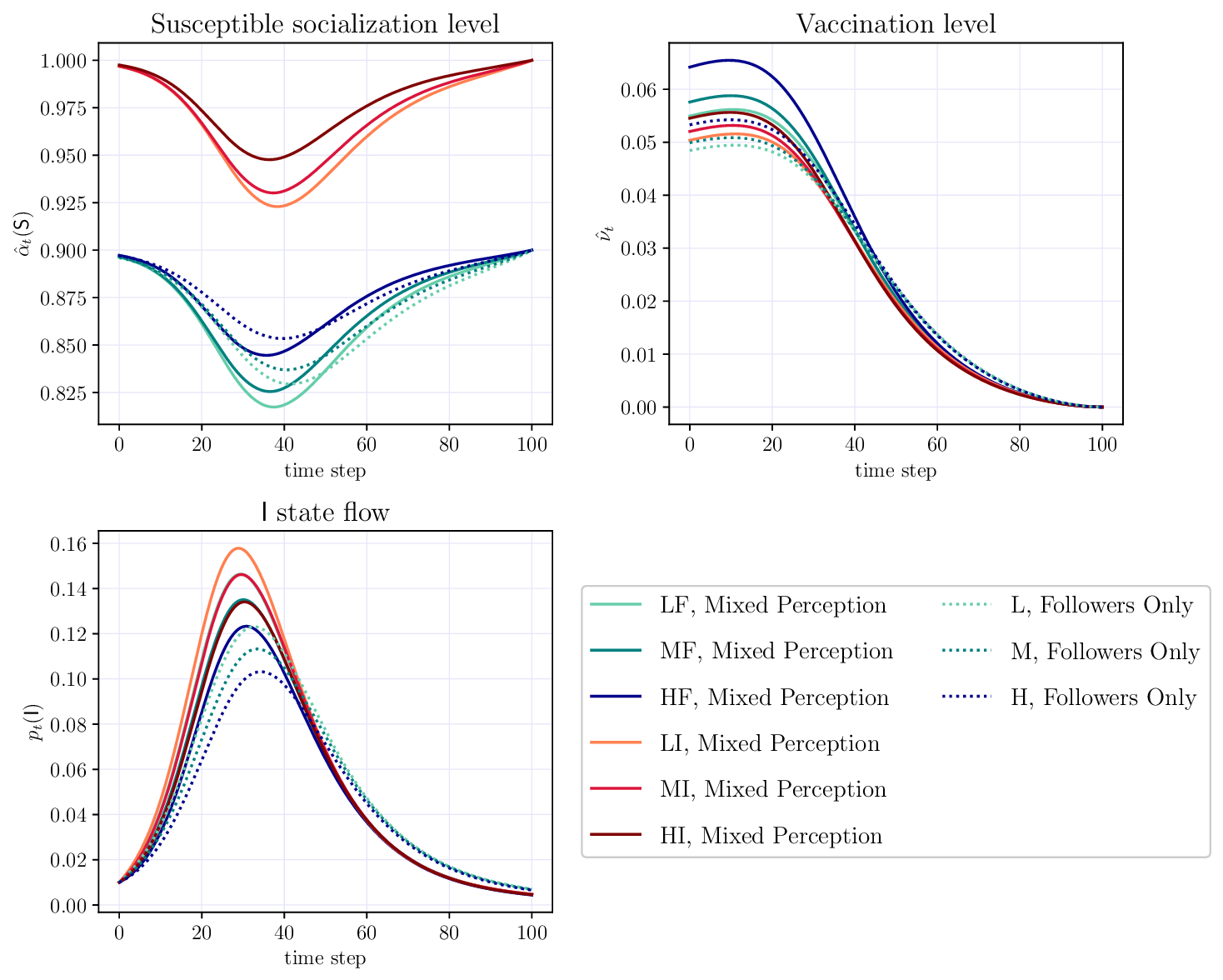}
    \captionsetup{font=footnotesize}
    \caption{
        Comparison of epidemics mitigation effects under two population structure regimes. \textbf{Mixed perception}: population is stratified with authority perceptions and economic status (i.e. 6 groups: $\lf$, $\mf$, $\hf$, $\li$, $\mi$, $\hi$); \textbf{Followers only}: population is only stratified with economic status (i.e. 3 groups: $\mathsf{M}$($\mathsf{F}$), $\mathsf{M}$($\mathsf{F}$), $\mathsf{H}$($\mathsf{F}$)).
    }
    \label{fig:mixed_followers_comparison}
\end{figure}

\subsection*{Discussion}
Our findings reveal that epidemic mitigation is not solely a matter of imposing stricter social-distancing policies or reducing (perceived) vaccination costs; rather, it depends critically on behavioral response of individuals and their heterogeneity shaped by economic status and authority perception. Using a survey‑informed, multi‑population mean field game with vaccination and socialization level controls, we find that targeting social distancing to infected individuals consistently lowers infection proportions across income and authority‑perception groups, with susceptible followers partially increasing their socialization as infected followers sharply reduce their socialization levels. Stricter guidelines further flatten and delay infection peaks for all groups, but they also depress vaccination among follower individuals to the point that vaccination levels can fall below those of indifferent individuals, revealing a trade-off between non-pharmaceutical distancing and vaccination choices.

These dynamics are governed by two mechanisms. First of them is the \textit{behavioral trade-off}: when guidelines tighten, susceptible individuals perceive greater protection due to others’ reduced socialization levels and they lower their vaccination. Second is the \textit{free‑riding}: susceptible indifferent individuals raise socialization levels under stricter guidelines, with measurable socialization level peak increases for $\li$, $\mi$, and $\hi$ relative to the results under permissive guidelines.

Policy implications follow directly from these findings. Adaptive guidelines that focus on infected individuals are effective at reducing the infection peaks without negatively affecting the socialization levels of susceptible individuals. Conversely, lowering vaccination costs alone raises vaccination levels especially in mid‑ and low‑income groups, yet has a modest effect on decreasing infection peaks, suggesting that cost‑only interventions may be insufficient unless paired with socialization guidelines on infected individuals. 
We observe that low‑income indifferent individuals bear the highest infection burden across different regimes, and the largest infection disparity typically arises between $\li$ and $\hf$ groups. Stricter guidelines compress disparities in socialization and vaccination levels, while uniformly low vaccination costs narrow infection disparities and incentivize low- and mid-income groups to vaccinate more than high-income groups. These findings suggests intervention strategies that integrate (i) infected‑targeted distancing (e.g., isolation/quarantine support, mask guidance), (ii) trust‑building (transparent communication, community engagement to increase desire of individuals to follow the guidelines), and (iii) targeted economic support to reduce logistical and perceived vaccination costs for low‑income groups.

This work complements and extends prior modeling paradigms. Unlike compartmental models that treat interventions as exogenous modifiers of transmission rates, our framework endogenizes human response and computes the equilibrium behavior induced by interacting decision‑makers, yielding a forward–backward differential system that characterizes behavior and disease dynamics jointly. 

The modeling of individual decision-making in our paper also contrasts with the (stochastic or deterministic) optimal control models that focus on optimal public health policy-making to minimize the epidemic spread~\cite{acemoglu2021optimal,jia2025learning,rowthorn2009optimal,lu2021stability} by modeling the decision-making of the public health authority instead of modeling the objectives and decision-making of large number of individuals in the population. These optimal control models assume the infection dynamics in the population follow mechanistic models in which the transition rates are directly affected by the policy-makers decisions. In contrast, in our setting policy guidelines do not directly affect the transmission rates instead they indirectly affect them via the objective functions.

Finally, our model offers higher tractability in comparison to agent-based models and simulations~\cite{abm_kerr2021covasim,abm_kloh2020virus,abm_perez2009agent,abm_silva2020covid}. Agent-based models offer modeling of heterogeneity and individual decision making, but can be computationally intensive and opaque for policy optimization. In comparison to these models, our study characterizes Nash equilibrium with a differential equation system, guarantees the existence and uniqueness of equilibrium behavior, and proposes a computationally efficient numerical algorithm. Recent feedback-informed models~\cite{decision-making-sim} embed economic choice into epidemic dynamics, echoing our finding that behavioral feedback reshapes policy effectiveness. Our mean-field framework provides a mathematically tractable lens on these dynamics. As we mentioned in the introduction, MFGs and their extensions have been used to model the epidemic spread among rational individuals~\cite{aurell2022optimal,aurell2022finite,bremaud2024mean,olmez2022modeling,lee2020controlling,elie2020contact,cho2020mean,Doncel_Gast_Gaujal_2022,pnas_epidemics}. However, none of these studies have considered in their models (1) the joint rational decision-making of individuals concerning both vaccination and social distancing, and (2) population  stratification based on economic status and perceptions of authority.

The limitations of our study is as follows. We model individuals as homogeneous within each of six groups with interactions among individuals captured through mean‑field terms in an infinite‑population limit. Real populations exhibit richer within‑group heterogeneity and network effects. In addition, the empirical calibration relies on a two‑wave Illinois survey conducted in 2024–2025 and therefore, external validity beyond this time frame requires further investigation. 

By integrating survey-informed parameters into a rigorous and tractable game-theoretic framework, this study bridges theoretical modeling with observed behavioral patterns to understand the human response to policy-making related to epidemic control. Future work could extend this approach to study \textit{optimal} policies for public health authorities under different objectives such as diminishing health disparities, flattening the infected curve or minimizing the economical impacts. Ultimately, our approach provides a interdisciplinary strategy that combines epidemiological modeling, game-theoretical tools, applied mathematics, and social trust to design interventions that are both effective and equitable.

\bibliographystyle{plain}
\bibliography{references}

\appendix
\section*{Supporting Information}

\section{From SIR to Mean Field Game State Transitions}
\label{sec:sir_state dynamics_explanation}
To further explain how we formulate the state transitions, we begin with a finite-population model with $N$ individuals. At time $t\in[0,T]$, each individual $j\in\{1,\ldots,N\}$ occupies a health state $X_t^{j,N}\in\{\sS,\sI,\sR\}$. The empirical proportion in each compartment is
$$
p_t^N(e)=\frac{1}{N}\sum_{j=1}^N \mathbf{1}_e(X_t^{j,N}), 
\quad e\in\{\sS,\sI,\sR\},
$$
where $\mathbf{1}_e(X_t^{j,N})$ is the indicator function that is equal to 1 if $X_t^{j,N}=e$ and equal to $0$ otherwise. Inspired by the SIR model, we start by modeling the \textit{uncontrolled} state transitions for individuals without the vaccination or social distancing decisions. For the sake of simplicity in notations and explanations, we also ignore waning of immunity in this first step. In this model, individuals meet with each other pairwise at a random rate. The number of encounters with infected individuals at time $t$ is proportional
to the the proportion of infected individuals at that time, $p_t^N(\sI)$ and a base transmission rate $\beta$. Therefore, a susceptible individual becomes infected at rate $\beta\, p_t^N(\sI)$ (i.e., $\sS\to\sI$) and an infected individual recovers at a constant rate $\gamma$ (i.e., $\sI\to\sR$), where $\beta,\gamma>0$. The state process of the individual accordingly follows a continuous time Markov chain with the following transition-rate matrix:
\begin{equation}
\label{eq:q_matrix_sir_uncontrolled}
\mathcal{T}(p_t^N)=
\begin{blockarray}{ccccc}
& \sS & \sI & \sR \\[0.5mm]
\begin{block}{c[cccc]}
  \sS & \cdots  & \beta p_t^N(\sI) & 0 \\
  \sI & 0  & \cdots & \gamma \\
  \sR & 0 & 0 & \cdots \\
\end{block}
\end{blockarray}.
\end{equation}
To emphasize the connection of the above individualistic setting, we state that following the classical results from~\cite{kurtz1981approximation}, when the number of individuals is taken to infinity the limit of $p_t^N$ becomes a solution to the following equations:
\begin{equation*}
\begin{aligned}
    \frac{dp_t(\sS)}{dt} &= -\beta p_t(\sI) p_t(\sS)\\
    \frac{dp_t(\sI)}{dt} &= \beta p_t(\sI) p_t(\sS)-\gamma p_t(\sI)\\
    \frac{dp_t(\sR)}{dt} &= \gamma p_t(\sI),\\
\end{aligned}
\end{equation*}
with initial conditions $p_0^N$ are given.
We emphasize that since the number of susceptible, infected, and recovered individuals are given as $(S, I, R)=(Np_t(\sS), N p_t(\sI), Np_t(\sR))$, the above equation system becomes the classical SIR model:
\begin{equation*}
\begin{aligned}
    \frac{dS}{dt} &= -\frac{\beta}{N} I S\\
    \frac{dI}{dt} &= \frac{\beta}{N} IS -\gamma I\\
    \frac{dR}{dt} &= \gamma R,\\
\end{aligned}
\end{equation*}
with initial number of individuals in each compartment $(S_0, I_0, R_0)$ are given. To incorporate the individual decision making, extending the formulation in~\cite{aurell2022optimal}, we allow individuals to choose their social activity (i.e, socialization level) and also vaccination. Let $\alpha_t^{j,N}\in[0,1]$ denote the socialization level of individual $j$ at time $t$. Encounters remain random, but infection now depends on individual's own socialization level and also others' infection status (i.e., whether they are infected or not) and socialization levels. Therefore, a susceptible individual $j$ becomes infected at rate
$$
\beta\,\alpha_t^{j,N}\Big(\frac{1}{N}\sum_{i=1}^N \alpha_t^{i,N}\,\mathbf{1}_{\sI}(X_t^{i,N})\Big).
$$
As $N\to\infty$, by using law of large numbers, the interactions occur through population distributions instead of empirical distributions. Denoting the joint distribution of control and state by $\pi_t$, a representative susceptible individual using action $\alpha_t$ transitions to state $\sI$ at the limiting rate
$$
\beta\,\alpha_t \int_A a\,\pi_t(da,\sI),
$$
which summarizes the infection pressure generated by the population's collective behavior. We also allow vaccination: individual $j$ chooses a vaccination intensity $\nu_t^{j,N}\ge 0$, which induces a transition $\sS\to\sR$ at a rate proportional to $\nu_t^{j,N}$.
Finally, to capture heterogeneity, we partition the population into $K$ many groups (in this work, by economic status and authority perception) with possibly different model parameters. The group sizes are denoted $N^1, \ldots, N^k$ and they satisfy $N^1+\cdots+N^K=N$. The proportion of group $k$ is given by $m^k:=N^k/N$.
In the limiting regime where $N$ goes to infinity, proportions of groups are assumed to remain fixed. Under large population limit, this yields a controlled mean-field model in which a representative agent (within each group) optimizes her socialization and vaccination levels in response to the aggregate flow generated by the population.

\section{SIR-type Model Theoretical Results}
\label{sec:sir_theoretical_result}

First, we give the characterization of the multi-population mean field game (MFG) Nash equilibrium with forward-backward ordinary differential equations (FBODE). Intuitively, this means, finding the multi-population MFG Nash equilibrium will be equivalent to solving the FBODE given in Theorem~\ref{the:fbode_sir}.

\begin{assumption}
\label{assu:fbode}
    Public health policy functions $\lambda^{k,e}: [0,T]\to[0, \bar{\lambda}]$ for all $k\in[K]$ and $e\in\{\sS, \sI, \sR\}$ are continuous.
\end{assumption}

\begin{theorem}
\label{the:fbode_sir}
    Under assumption~\ref{assu:fbode}, multi-population mean field game Nash equilibrium controls are given as 
    \begin{align}
        \hat{\alpha}_t^k(\sS) &= \lambda_t^{k, \sS} + \frac{\beta^k Z_t^k \big(u_t^k(\sS) - u_t^k(\sI)\big)}{2c_\lambda^k} \qquad\qquad&& \hat{\alpha}_t^k(\sS) = 1 + \frac{\beta^k Z_t^k \big(u_t^k(\sS) - u_t^k(\sI)\big)}{2c_\lambda^k} \label{eq:alphaS}\\
        \hat{\alpha}_t^{k}(\sI) &= \lambda_t^{k, \sI} && \hat{\alpha}_t^{k}(\sI) =\xi_{\sI}\label{eq:alphaI}\\
        \hat{\alpha}_t^{k}(\sR) &= \lambda_t^{k, \sR}\label{eq:alphaR} &&\hat{\alpha}_t^{k}(\sR)=1\\
        \hat{\nu}^k_t &= \underbrace{\frac{\kappa^k \big(u_t^k(\sS) - u_t^k(\sR)\big)}{2c_\nu^k}}_{{k\:\in\:\{\lf,\: \mf,\: \hf}\}} && \hat{\nu}^k_t= \underbrace{\frac{\kappa^k \big(u_t^k(\sS) - u_t^k(\sR)\big)}{2c_\nu^k}}_{{k\:\in\:\{{\li,\: \mi,\: \hi}}\}}\label{eq:Nu}
    \end{align}
    for all $k \in[K]$ if $(\bp,\bu)=(\bp^k, \bu^k)_{k\in[K]}$ tuple solves the following forward-backward ordinary differential equation (FBODE) system:    
\begin{equation}
\label{eq:FBODE}
\begin{aligned}
    \dot p^k_t(\sS) &= -\beta^k \hat{\alpha}^{k}_t(\sS) Z_t^k p_t^k(\sS) - \kappa^k \hat{\nu}_t^kp_t^k(\sS) + \eta^k p_t^k(\sR) ,\\[1mm]
    \dot p^k_t(\sI) &= \beta^k \hat{\alpha}^{k}_t(\sS) Z_t^k p_t^k(\sS) - \gamma^k p_t^k(\sI),\\[1mm]
    \dot p^k_t(\sR) &= \gamma^k p_t^k(\sI) +\kappa^k \hat{\nu}_t^kp_t^k(\sS) - \eta^k p_t^k(\sR),
    \\[1mm]
    \dot u^{k_1}_t (\sS)&= \beta^{k_1} \hat\alpha^{k_1}_t(\sS)Z^{k_1}_t\big(u^{k_1}_t(\sS) - u^{k_1}_t( \sI)\big) + \kappa^{k_1} \hat{\nu}_t^{k_1} \big(u^{k_1}_t(\sS) - u^{k_1}_t( \sR)\big) 
    -c^{k_1}_\lambda \big(\lambda^{{k_1}, \sS}_t-\hat\alpha^{k_1}_t(\sS)\big)^2 - c_\nu^{k_1} (\hat{\nu}_t^{k_1})^2,
    \\[1mm]
    \dot u^{k_2}_t (\sS)&= \beta^{k_2} \hat\alpha^{k_2}_t(\sS)Z^{k_2}_t\big(u^{k_2}_t(\sS) - u^{k_2}_t( \sI)\big) + \kappa^{k_2} \hat{\nu}_t^{k_2} \big(u^{k_2}_t(\sS) - u^{k_2}_t( \sR)\big) 
    -c^{k_2}_\lambda \big(1-\hat\alpha^{k_2}_t(\sS)\big)^2 - c_\nu^{k_2} (\hat{\nu}_t^{k_2})^2,
    \\[1mm]
    \dot u^k_t (\sI)&= \gamma^k \big(u^k_t(\sI) - u^k_t( \sR)\big) - c^k_{\sI},
    \\[1mm]
    \dot u^k_t (\sR)&= \eta^k (u_t^k(\sR)-u_t^k(\sS)),
    \\[1mm]
     u^k_T(e)&= 0,\quad p^k_0(e) = p^k(e),\quad \forall e\in \{\sS, \sI, \sR\},  \\[1mm]
     Z^k_t &= \sum_{l \in [K]} w(k, l) \EE[\hat{\alpha}_t^l(\sI)] p_t^l(\sI) m^l, \quad \forall k \in [K],\: \forall k_1\in\{\lf,\mf, \hf\},\: \forall k_2\in\{\li, \mi, \hi\},\: \forall t\in[0,T].
\end{aligned}
\end{equation}
\end{theorem}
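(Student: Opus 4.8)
The plan is to derive the characterization by a standard dynamic-programming / Pontryagin argument applied to each representative agent's control problem, since the game decouples into $K$ independent optimal control problems once the mean-field term $\bZ$ is frozen. First I would fix an arbitrary group $k$ and an arbitrary candidate profile $\bZ$, and observe that the state $X^k$ of the representative agent is a controlled continuous-time Markov chain on $\{\sS,\sI,\sR\}$ with the transition-rate matrix read off from Figure~\ref{fig:transitionrates_mfg}, where the $\sS\to\sI$ rate is $\beta^k\alpha_t^k(\sS)Z_t^k$, the $\sS\to\sR$ rate is $\kappa^k\nu_t^k$, the $\sI\to\sR$ rate is $\gamma^k$, and the $\sR\to\sS$ rate is $\eta^k$. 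The value function $u_t^k(e)$, defined as the minimal expected remaining cost given $X_t^k=e$, satisfies the (backward) Hamilton--Jacobi--Bellman ODE system obtained by the infinitesimal-generator identity: for each state $e$, $-\dot u_t^k(e)$ equals the minimum over the admissible controls in that state of the running cost plus the generator applied to $u_t^k(\cdot)$, with terminal condition $u_T^k(e)=0$.

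Next I would carry out the pointwise minimization in the HJB equation state by state. In state $\sS$ the relevant term is, for a follower, $\min_{\alpha\in[0,1],\nu\ge 0}\big\{c_\lambda^k(\lambda_t^{k,\sS}-\alpha)^2 + c_\nu^k\nu^2 + \beta^k\alpha Z_t^k(u_t^k(\sI)-u_t^k(\sS)) + \kappa^k\nu(u_t^k(\sR)-u_t^k(\sS))\big\}$; this is a separable convex quadratic in $(\alpha,\nu)$, so the unconstrained minimizers are $\hat\alpha_t^k(\sS)=\lambda_t^{k,\sS}+\beta^k Z_t^k(u_t^k(\sS)-u_t^k(\sI))/(2c_\lambda^k)$ and $\hat\nu_t^k=\kappa^k(u_t^k(\sS)-u_t^k(\sR))/(2c_\nu^k)$, which are exactly \eqref{eq:alphaS} and \eqref{eq:Nu}; the indifferent case is identical with $\lambda_t^{k,\sS}$ replaced by $1$. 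In states $\sI$ and $\sR$ the socialization control appears only through the quadratic penalty $(\lambda_t^{k,e}-\alpha)^2$ (resp.\ $(\xi_\sI-\alpha)^2$, $(1-\alpha)^2$) with no coupling to the generator, since the $\sI\to\sR$ and $\sR\to\sS$ rates do not depend on $\alpha$, so the minimizer is simply $\hat\alpha_t^k(\sI)=\lambda_t^{k,\sI}$ (resp.\ $\xi_\sI$) and $\hat\alpha_t^k(\sR)=\lambda_t^{k,\sR}$ (resp.\ $1$), giving \eqref{eq:alphaI}--\eqref{eq:alphaR}. Substituting these minimizers back into the HJB system yields exactly the backward $u$-equations in \eqref{eq:FBODE}: e.g.\ in state $\sS$ the term $\beta^k\hat\alpha_t^k(\sS)Z_t^k(u_t^k(\sS)-u_t^k(\sI))$, the vaccination term $\kappa^k\hat\nu_t^k(u_t^k(\sS)-u_t^k(\sR))$, and the residual running cost $-c_\lambda^k(\lambda_t^{k,\sS}-\hat\alpha_t^k(\sS))^2 - c_\nu^k(\hat\nu_t^k)^2$ after collecting the quadratic-completion terms. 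One must check that the admissibility constraints $\alpha\in[0,1]$, $\nu\ge 0$ are not binding in the regime of interest; under Assumption~\ref{assu:fbode} and the stated experiment parameters the correction terms are small, and I would either argue this directly or remark that the characterization is stated for the interior regime (the full theorem in the SI presumably handles truncation).

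The forward $p$-equations are then the Kolmogorov forward (Fokker--Planck) equations for the law of $X^k$ under the equilibrium controls: $\dot p_t^k(e)$ equals the net probability flux into state $e$, which for $\sS$ is $-\beta^k\hat\alpha_t^k(\sS)Z_t^k p_t^k(\sS) - \kappa^k\hat\nu_t^k p_t^k(\sS) + \eta^k p_t^k(\sR)$, and similarly for $\sI$ and $\sR$; the initial condition $p_0^k(e)=p^k(e)$ is given. Finally, the fixed-point (consistency) condition of the Nash equilibrium forces $Z_t^k=\sum_{l}w(k,l)\EE[\hat\alpha_t^l(\sI)]p_t^l(\sI)m^l$; since $\hat\alpha_t^l(\sI)$ is deterministic ($\lambda_t^{l,\sI}$ or $\xi_\sI$), the expectation is trivial, but keeping $\EE[\cdot]$ makes the closure explicit. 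Collecting the optimality conditions, the backward HJB system, the forward Kolmogorov system, and the fixed-point relation gives precisely \eqref{eq:FBODE}, proving that any solution of the FBODE produces equilibrium controls via \eqref{eq:alphaS}--\eqref{eq:Nu}. The main obstacle I anticipate is the rigorous justification that the HJB/verification argument applies to this finite-state controlled Markov chain with the given unbounded-in-principle vaccination control and the constrained socialization control — i.e.\ checking that the candidate value function is indeed optimal (a verification theorem) and that the pointwise minimizers are admissible — but because the Hamiltonian is strictly convex and separable in $(\alpha,\nu)$, this reduces to elementary calculus plus a standard verification lemma, so I expect the bookkeeping of substituting minimizers back into the generator to be the most error-prone rather than conceptually hard part.
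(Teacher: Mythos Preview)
Your proposal is correct and follows essentially the same route as the paper: write the transition-rate matrix, form the Hamiltonian as generator-plus-running-cost, minimize it pointwise in $(\alpha,\nu)$ using strict convexity and separability to obtain \eqref{eq:alphaS}--\eqref{eq:Nu}, then record the resulting HJB (backward) and Kolmogorov forward equations together with the consistency relation for $Z_t^k$. Your discussion of the admissibility constraints and the need for a verification step is actually more careful than what the paper writes down; the paper simply invokes the dynamic-programming principle and first-order conditions without addressing truncation.
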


The forward dynamics with the initial condition (i.e., dynamics of $(p_t^{k}(e))_{k\in[K], t\in[0,T], e\in\{\sS, \sI, \sR\}}$) in the FBODE system~\eqref{eq:FBODE} represent how the proportion of susceptible, infected, and recovered individuals evolves in groups when population is at the multi-population mean field Nash equilibrium. We use notation $u_t^k(e)$ for all $e\in \{\sS, \sI, \sR\}$ to represent the value function of the representative agent in group $k$ at time $t$ in state $e$ which is defined for $k\in\{\lf, \mf,\hf\}$ as:
\begin{equation*}
\begin{aligned}
    &u_t^k(e) :=  
\inf_{(\alpha^k_\tau, \nu^k_\tau)_{\tau\in[t,T]}} \ \mathbb{E}\Big[\int_t^T \Big[\big(c^k_{\lambda} (\lambda_\tau^{k,\sS} -\alpha_\tau^k)^2 + c_{\nu}^k (\nu_\tau^k)^2 \big)\mathbf{1}_{\sS}(X^k_\tau) \\&\hskip5cm+ 
\big((\lambda_\tau^{k,\sI} -\alpha_\tau^k)^2 + c_{\sI}^k\big)\mathbf{1}_{\sI}(X^k_\tau) 
    +(\lambda_\tau^{k,\sR} -\alpha_\tau^k)^2\mathbf{1}_{\sR}(X^k_\tau)\Big] d\tau\Big| X_t = e\Big],
    \end{aligned}
\end{equation*}
and for $k\in\{\li, \mi,\hi\}$ as:
\begin{equation*}
\begin{aligned}
    &u_t^k(e) :=  
\inf_{(\alpha^k_\tau, \nu^k_\tau)_{\tau\in[t,T]}} \ \mathbb{E}\Big[\int_t^T \Big[\big(c^k_{\lambda} (1 -\alpha_\tau^k)^2 + c_{\nu}^k (\nu_\tau^k)^2 \big)\mathbf{1}_{\sS}(X^k_\tau) \\&\hskip5cm+ 
\big((\xi_{\sI} -\alpha_\tau^k)^2 + c_{\sI}^k\big)\mathbf{1}_{\sI}(X^k_\tau) 
    +(1 -\alpha_\tau^k)^2\mathbf{1}_{\sR}(X^k_\tau)\Big] d\tau\Big| X_t = e\Big].
    \end{aligned}
\end{equation*}
Intuitively, the value function $u_t^k(e)$ gives the expected minimum cost between time $t$ and time horizon $T$ when the representative agent (in group) $k$ starts from state $e$ and state transitions follow the controlled dynamics given in Figure 1 in the main text. Then, the backward equations in the FBODE~\eqref{eq:FBODE} represent the dynamics of the value functions of representative agent in group $k$ in the states susceptible, infected, and recovered at the multi-population mean field Nash equilibrium. 

In FBODE~\eqref{eq:FBODE}, the forward and backward components are coupled through $\hat{\alpha}_t^k(\sS)$ and $\hat{\nu}^k_t$. Furthermore, the above system includes the equations for all $k\in[K]$ which are all coupled with each other through the weighted mean field interactions (i.e. aggregate socialization level of infected individuals), $Z_t^k$, which creates further challenges in solving them.

\begin{proof}[Proof of Theorem~\ref{the:fbode_sir}]
    We first describe the transitions between infection states of representative individual in group $k \in [K]$ by a transition rate matrix of the continuous time Markov chain
\begin{equation}
\label{eq:q_matrix_sir}
\begin{blockarray}{ccccc}
& \sS & \sI & \sR \\[0.5mm]
\begin{block}{c[cccc]}
  \sS & \cdots  & \beta^k \alpha_{t}^{k} Z_{t}^{k} & \kappa^k \nu_t^k \\
  \sI & 0  & \cdots & \gamma^k \\
  \sR & \eta^k & 0 & \cdots \\
\end{block}
\end{blockarray}
\end{equation}
    where notation $\dots$ on each row represents the negative of the sum of all other entries in the same row to make the row sum of the transition rate matrix of the continuous-time Markov chain equal to $0$. We further denote by $q^k_t(e, e^{\prime}, z, \alpha, \nu)$ the transition rate from state $e$ to $e^\prime$, where $e, e' \in\{\sS, \sI, \sR\}$, for the representative individual. It corresponds to the entry of matrix~\eqref{eq:q_matrix_sir} indexed by $(e, e')$. The Hamiltonian of the representative individual in group $k\in[K]$ could then be written as follows:
    \begin{equation*}
    \begin{aligned}
        H^k(t, e, z, \alpha, \nu, u)=\sum_{e^\prime\in \{\sS, \sI, \sR\}} q^k_t(e, e^{\prime}, z, \alpha, \nu)u(e^{\prime}) + f^k(t, e, z, \alpha, \nu), 
    \end{aligned}
    \end{equation*}
    where $$f^k(t,e,z,\alpha,\nu) = \big(c^k_{\lambda} (\lambda_t^{k,\sS} -\alpha)^2 + c_{\nu}^k \nu^2 \big)\mathbf{1}_{\sS}(e) + \big((\lambda_t^{k,\sI} -\alpha)^2 + c_{\sI}^k\big)\mathbf{1}_{\sI}(e)+ (\lambda_t^{k,\sR} -\alpha)^2\mathbf{1}_{\sR}(e).$$ By dynamic programming principle of optimal control, the Nash equilibrium controls will be derived by minimizing the Hamiltonian with respect to the controls. Since $f^k$ is strongly convex and additive separable on $(\alpha, \nu)$, the mapping $[0,1] \times [0, V] \ni (\alpha,\nu) \mapsto H^k(t, e, z, \alpha, \nu)$ admits a unique measurable and explicitly solvable minimizer. Denote the minimizer by $\hat{\alpha}^k_t(e)$ for the socialization level and $\hat{\nu}_t^k$ for the vaccination level, we compute it using first-order conditions to achieve~\eqref{eq:alphaS},~\eqref{eq:alphaI},~\eqref{eq:alphaR} and~\eqref{eq:Nu}.
    This minimizer will be functions of the other input variables of the Hamiltonian function: $t, z, u$. Then the Hamilton-Jacobi-Bellman (HJB) equation for the finite-state model can be written as:
    \begin{equation*}
        \dot{u}_t^k(e) = - H^{k}(t, e, {Z}_t^k, u^k_t(\cdot), \hat{\alpha}_t^k(\cdot), \hat{\nu}^k_t), \ u_T^{k}(e) = 0, \ t\in[0,T], e\in E.
    \end{equation*}
    where $Z_t^k = \sum_{l \in [K]} w(k, l) \EE[\hat{\alpha}_t^l(\sI)] p_t^l(\sI) m^l $ denotes the aggregate that is calculated by using the MFG Nash equilibrium controls. At the MFG Nash equilibrium, this HJB system will be coupled with the following Kolmogorov-Fokker-Planck (KFP) equation that gives the dynamics of the state distributions at the equilibrium:
    \begin{equation*}
        \dot{p}_t^k(e) = \sum_{e^{\prime} \in E} q^k_t(e^{\prime}, e, Z_t^k, \hat{\alpha}_t^k, \hat{\nu}_t^k) p_t^k(e^{\prime}), \ p_0^{k}(e) = p^k(e), \ t \in [0,T], e \in E.
    \end{equation*}
        Since $\hat{\alpha}_t^k$ and $\hat{\nu}_t^k$ are functions of $Z_t^k$ and $u_t^k$, the (forward) KFP and (backward) HJB equations are coupled to achieve~\eqref{eq:FBODE}.
\end{proof}

\begin{theorem}
\label{the:mfg_nash_existence_uniqueness_SIR}
    Under short-time conditions, there exists a unique bounded solution $(\bp,\bu)$ to the FBODE system in~\eqref{assu:fbode}. 
\end{theorem}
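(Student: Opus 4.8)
The plan is to recast the system~\eqref{eq:FBODE} as a fixed-point equation for the mean-field aggregates $\bZ$ alone and close it with the Banach fixed-point theorem on a short enough horizon. The structural fact that makes this tractable is that the only coupling between groups — and the only coupling between the forward and backward blocks beyond the explicit control formulas — is through $Z^k_t$, and moreover by~\eqref{eq:alphaI} the infected-state equilibrium socialization is $\hat\alpha^l_t(\sI)=\lambda^{l,\sI}_t$ for followers and $\hat\alpha^l_t(\sI)=\xi_\sI$ for indifferents, i.e.\ deterministic, bounded by $\bar\alpha_\sI:=\max(\bar\lambda,\xi_\sI)\le 1$, and continuous in $t$ under Assumption~\ref{assu:fbode}. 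Hence $Z^k_t=\sum_{l\in[K]}w(k,l)\,\hat\alpha^l_t(\sI)\,p^l_t(\sI)\,m^l$ is a known affine functional of $\bp$, valued in $[0,\bar\alpha_\sI]$, and Lipschitz in $\bp$ with a constant depending only on $w,m,\bar\alpha_\sI$; in particular $\hat\alpha^l_t(\sI)$ does not depend on $\bZ$ at all.

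Concretely, fix $\bZ$ in the complete metric space $\mathcal K:=\{Z\in C([0,T];\mathbb R^K):0\le Z^k_t\le\bar\alpha_\sI\}$ (sup-norm) and define $\Psi(\bZ)$ in three steps. (i) Solve the backward (HJB) block of~\eqref{eq:FBODE} with $u^k_T(e)=0$, where $\hat\alpha^k_t(\sS),\hat\nu^k_t$ are replaced by their explicit expressions~\eqref{eq:alphaS} and~\eqref{eq:Nu} in $u$ and the frozen $\bZ$; with $\bZ$ frozen in the transition rates this is exactly the HJB equation of a finite-state control problem, whose unique bounded classical solution is the value function $\bu=\bu[\bZ]$. (ii) Form $\hat\alpha^k_t(\sS),\hat\nu^k_t$ from $\bu$ and $\bZ$ (composing with the $1$-Lipschitz projections onto $[0,1]$ and $[0,\infty)$ if one wants to enforce the original control sets — this changes nothing below). (iii) Solve the forward (Kolmogorov) block with $p^k_0(e)=p^k(e)$ for $\bp=\bp[\bZ]$ (a linear ODE in $\bp$ with bounded continuous coefficients, globally solvable), and set $\Psi(\bZ)^k_t:=\sum_{l}w(k,l)\,\hat\alpha^l_t(\sI)\,p^l_t(\sI)\,m^l$. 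A fixed point of $\Psi$ yields, with the associated $\bu,\bp$, a bounded solution of~\eqref{eq:FBODE}, and conversely any bounded solution induces such a fixed point (its $\bp$ lies in the simplex, so its aggregate lies in $\mathcal K$, and bounded-solution uniqueness of the HJB block then forces consistency), so the theorem reduces to showing $\Psi$ has a unique fixed point in $\mathcal K$ for $T$ small.

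Three a priori estimates power this. First, the forward block preserves the simplex: the three equations for $p^k_t(\cdot)$ sum to zero so $\sum_e p^k_t(e)\equiv1$, and each coordinate stays nonnegative because its outflow terms vanish on $\{p^k_t(e)=0\}$; thus $p^k_t(e)\in[0,1]$, which forces $\Psi(\bZ)\in\mathcal K$ (self-map property). Second, for any frozen $\bZ\in\mathcal K$ the value function obeys $0\le u^k_t(e)\le c^k_\sI(T-t)$ uniformly in $\bZ$ — nonnegativity since all running costs are nonnegative, and the upper bound by evaluating the cost at the admissible choice $\alpha_\tau=\lambda^{k,\cdot}_\tau$ (resp.\ $\alpha_\tau=1$ or $\xi_\sI$ for indifferents) and $\nu\equiv0$, which kills every term except $c^k_\sI\mathbf 1_\sI$; this bound both precludes the blow-up that the quadratic terms in $\dot u^k(\sS)$ would otherwise permit (so the backward block is globally solvable on $[0,T]$) and bounds $\hat\nu^k_t$ and $\hat\alpha^k_t(\sS)$. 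Third, on these bounded sets the maps $\bZ\mapsto\bu[\bZ]$, $(\bu,\bZ)\mapsto(\hat\alpha(\sS),\hat\nu)$, and $(\hat\alpha,\hat\nu,\bZ)\mapsto\bp[\bZ]$ are Lipschitz for the sup-norm with constants controlled by the a priori bounds above, the two ODE solves carrying an extra factor of order $T$ via Gronwall (the compared trajectories share the same terminal or initial datum and differ only through coefficients). Since $\Psi(\bZ)$ depends on $\bZ$ only through $\bp[\bZ]$, composing gives $\|\Psi(\bZ^1)-\Psi(\bZ^2)\|_\infty\le C_\star T\|\bZ^1-\bZ^2\|_\infty$ with $C_\star$ independent of $T$ for, say, $T\le1$; taking $T<1/C_\star$ makes $\Psi$ a contraction on $\mathcal K$, and Banach's theorem gives the unique bounded solution.

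The step I expect to be the main obstacle is the bookkeeping in the third estimate: one must verify that every Lipschitz constant entering $C_\star$ — especially those from the bilinear terms $\beta^kZ^k_t\big(u^k_t(\sS)-u^k_t(\sI)\big)$ appearing both in $\hat\alpha^k_t(\sS)$ and in $\dot u^k_t(\sS)$, and from the product $\hat\alpha^k_t(\sS)Z^k_t p^k_t(\sS)$ in the forward block — is controlled purely by the a priori bounds (the simplex bound on $\bp$, the box bound $c^k_\sI T$ on $\bu$, the bound $\bar\alpha_\sI$ on $\bZ$) rather than by the unknown solution, so that the short-time threshold $1/C_\star$ is an explicit function of $(\beta^k,\kappa^k,\gamma^k,\eta^k,c^k_\lambda,c^k_\nu,c^k_\sI,\xi_\sI,\bar\lambda,w,m)$. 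Relatedly, one must establish the self-map property $\Psi(\mathcal K)\subseteq\mathcal K$ before invoking the contraction estimate, since the control formulas~\eqref{eq:alphaS}–\eqref{eq:Nu} are only globally Lipschitz after restriction to the a priori-bounded set; this is not circular because those bounds hold for the output of each of the three solves regardless of the input $\bZ$.
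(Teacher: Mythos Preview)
Your proposal is correct and follows the same overall strategy as the paper: a Banach fixed-point argument via the backward-HJB-then-forward-KFP decomposition, with Gr\"onwall estimates supplying the $T$-dependent contraction factor. The paper iterates on the full density flow $\bp$ (fix $\bp$, solve the HJB block for $\bu$, then the KFP block for $\tilde{\bp}$, and show $\bp\mapsto\tilde{\bp}$ contracts in a weighted sup-norm), whereas you iterate on the lower-dimensional aggregate $\bZ$; since the HJB block depends on $\bp$ only through $\bZ$ and $\hat\alpha^l_t(\sI)$ is deterministic, these are equivalent reductions of the same fixed point. Your a priori bound $0\le u^k_t(e)\le c^k_\sI(T-t)$, obtained by evaluating the cost at the guideline (resp.\ maximal) socialization with $\nu\equiv0$, is sharper and more self-contained than the paper's bound $\bar U$ coming from the raw cost-functional form, and your explicit simplex-preservation argument for the forward block is something the paper uses (``state density flow is bounded by $1$'') without justifying. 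Conversely, the paper carries out in full the constant bookkeeping you flag as the ``main obstacle,'' tracking every bilinear cross-term through Young's inequality and arriving at the explicit short-time threshold $A_2B_2\exp\big((A_1+B_1)T\big)T^2<1$; the $T^2$ (rather than your stated $C_\star T$) reflects that \emph{both} the HJB and KFP Gr\"onwall steps contribute a factor of $T$, which you should expect from your own outline as well.
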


\begin{remark}[Numerical benefit]
    Theorem~\ref{the:mfg_nash_existence_uniqueness_SIR} states that when the time horizon $T$ is small enough, there exists a unique multi-population MFG Nash equilibrium in the SIR epidemic model of individuals with different economic status and authority perception. Numerically, this unique equilibrium can be obtained via a fixed-point iteration (namely small-time solver), and it could be extended to larger horizons by patching consecutive runs~\cite{chassagneux2019numerical}. When $T$ is large, we partition $[0,T]$ into $N$ sub-intervals of length $\tau$, with $\tau$ small enough for the small-time solver to apply. A recursive global solver is then defined backward in time from step $N$ to $1$: for step $k \leq N-1$, the small-time solver is called using the terminal condition returned from step $k + 1$ and the initial condition is known upon recursive call. The procedure ends with terminal condition $u_T = 0$ at base case $k = N$.
\end{remark}

\begin{proof}[Proof of Theorem~\ref{the:mfg_nash_existence_uniqueness_SIR}]
    We prove the unique existence by using Banach Fixed Point Theorem. To accomplish this we construct a fixed point mapping by using the forward and backward components of the FBODE system and show that it is a contraction mapping. Since the equations of authority followers and indifferents differ only by model parameters, we replace $1$ and $\xi_{\sI}$ from the cost function of authority indifferents by using $\lambda_t^{k,\sS}=\lambda_t^{k,\sR}=1$ and $\lambda_t^{k, \sI}=\xi_{\sI}$ for $k\in\{\li, \mi, \hi\}$ and for $t\in[0,T]$ for the simplicity in the presentation of proof. There are in total $K=6$ economic/authority perception groups with $[K]:=\{\lf, \li, \mf, \mi, \hf, \hi\}$, and $3$ states $\sS$, $\sI$ and $\sR$ where $E:=\{\sS, \sI, \sR\}$. We consider the state density flow $p := (p_t^{k}(\sS),p_t^{k}(\sI),p_t^{k}(\sR))_{k \in [K], t \in [0,T]}$ and value function $u := (u_t^{k}(\sS),u_t^{k}(\sI),u_t^{k}(\sR))_{k \in [K], t \in [0,T]}$ as two vector processes. In the construction of the fixed point map by using the forward-backward components of the FBODE, we first fix $p$ flow and solve the backward HJB equations to obtain $u$; and by using these $u$, we solve the forward KFP equations to obtain $\tilde{p}$. We show that this mapping is a contraction mapping for $p \mapsto h_1(p)=u \mapsto h_2(u) = \tilde{p}$, that is, let $h:=h_2 \circ h_1$, for $p^{1} = (p^{1,k}_{t}(e))_{t\in[0,T],k\in[K],e\in E}$ and $p^{2} = (p^{2,k}_{t}(e))_{t\in[0,T],k\in[K],e\in E}$, 
    $$\left\|h\left(p^1\right)-h\left(p^2\right)\right\|_T \leq C \left\|p^1-p^2\right\|_T$$ where $C<1$ and $\left\|p\right\|_{T}:=\sup_{t \in [0,T]}\left\|p_t\right\|^2 = \sup_{t \in [0,T]}\sum_{k\in[K],e\in E} \left(p_t^{k}(e)\right)^2$. We start by defining the minimum/maximum of exogenous model parameters: $\bar{\lambda} := \max_{t \in [0,T], k \in [K], e \in E}{\lambda_t^{k,e}}$; $\ubar{\gamma}:=\min_{k\in[K]} \gamma^k$, $\bar{\gamma}:=\max_{k\in[K]} \gamma^k$; $\ubar{c}:=\min_{k \in [K]} \{c_{\nu}^{k}, c_{\sI}^{k}, c_{\lambda}^{k}\}$, $\bar{c}:=\max_{k \in [K]} \{c_{\nu}^{k}, c_{\sI}^{k}, c_{\lambda}^{k}\}$; $\bar{\beta} := \max_{k \in [K]}\beta^{k}$ ; $\bar{\eta} := \max_{k \in [K]} \eta^{k}$; $\bar{\kappa} := \max_{k \in [K]}\kappa^{k}$.  Denote $u^i(e) = (u^{i,k}_{t}(e))_{t\in[0,T],k\in[K]}$ and the corresponding equilibrium control $\hat{\alpha}^{i}(e) = (\hat{\alpha}^{i,k}_{t}(e))_{t\in[0,T],k\in[K]}$ for $e\in\{\sS, \sI, \sR\}$ and $\hat{\nu}^{i} = (\hat{\nu}^{i,k}_{t})_{t\in[0,T],k\in[K]}$, $i \in \{1,2\}$.
    We stress that the admissible set of socialization level is $[0,1]$, and further assume that the admissible set of vaccination level is $[0,V]$ with an upper bound $V > 0$.
    For $t \in [0,T]$, $k \in [K]$ and $e \in E$, we denote $\Delta p_t^{k}(e) = p^{1,k}_{t}(e) - p^{2,k}_{t}(e)$, $\Delta u_t^{k}(e) = u^{1,k}_{t}(e) - u^{2,k}_{t}(e)$, $\Delta \hat{\alpha}^{k}_t(e) = \hat{\alpha}^{1,k}(e) - \hat{\alpha}^{2,k}(e)$, $\Delta \hat{\nu}_{t}^{k} = \hat{\nu}_{t}^{1,k}-\hat{\nu}_{t}^{2,k}$ and $\Delta Z_{t}^{k} = Z_{t}^{1,k} - Z_{t}^{2,k}$.
    
    \par{\textbf{Step 0 (Boundedness and notational tools). }}We derive some bounds that will be frequently used in context.
    \begin{itemize}
        \item For aggregate function, consider $Z_{t}^{1,k}$ and $Z_{t}^{2,k}$, we have
        $$\left|\Delta Z_{t}^{k}\right| = \left|Z_{t}^{1,k} - Z_{t}^{2,k}\right| = \big| \sum_{l \in[K]} w(k, l) \lambda_t^{l, \sI} m^l \Delta p_t^l(\sI) \big|\leq\bar{W} \sum_{l \in [K]}|\Delta p_t^{l}(\sI)|,$$ where $\bar{W}:= \bar{\lambda}\argmax_{k,l} w(k, l) m^l$. Also, we have $\left|Z_t^k\right| \leq \bar{\lambda} \sum_{l \in[K]} w(k, l) m^l:=\bar{Z}$ by using the fact that state density flow is bounded by 1.
        \item For value function, we have its bound for all $t \in [0,T], k \in [K], e \in E$, by using the cost functional form $$\left|u_t^k(e)\right| \leq T\left[\bar{c}(\max(\bar{\lambda},A))^2+\bar{c} \max(1,V^2)\right] =: \bar{U}.$$
        \item We assume $|\hat{\alpha}_{t}^{k}(e)|\leq 1$ for simplicity for all $t \in [0,T], k \in [K], e \in E$. Consider $\hat{\alpha}_t^{1,k}$ and $\hat{\alpha}_t^{2,k}$, by adding and subtracting the cross-terms, we have
        $$\begin{aligned}
\left|\Delta \hat{\alpha}_t^k(\sS)\right| \leq & \frac{\beta^k}{2 c_\lambda^k}\left(\left|Z_t^{1, k}\right|\left|\Delta u_t^k(\sS)\right|+\left|u_t^{2, k}(\sS)\right|\left|\Delta Z_t^k\right|+\left|Z_t^{1, k}\right|\left|\Delta u_t^k(\sI)\right|+\left|u_t^{1, k}(\sI)\right|\left|\Delta Z_t^k\right|\right) \\
\leq & \frac{\bar{\beta}}{\ubarc} \bar{U}\left|\Delta Z_t^k\right|+\frac{\bar{\beta}}{2 \ubarc} \bar{Z}\left|\Delta u_t^k(\sS)\right|+\frac{\bar{\beta}}{2 \ubarc} \bar{Z}\left|\Delta u_t^k(\sI)\right| \\
\leq & \frac{\bar{\beta}}{\ubarc} \bar{U} \bar{W} \sum_{l \in[K]}\left|\Delta p_t^l(\sI)\right|+\frac{\bar{\beta}}{2 \ubarc} \bar{Z}\left|\Delta u_t^k(\sS)\right|+\frac{\bar{\beta}}{2 \ubarc} \bar{Z}\left|\Delta u_t^k(\sI)\right|.
\end{aligned}$$
        \item We assume $|\hat{\nu}_{t}^{k}| \leq V$ for simplicity for all $t \in [0,T], k \in [K]$. Consider $\hat{\nu}_t^{1,k}$ and $\hat{\nu}_t^{2,k}$, similarly
        $$\left|\Delta \hat{\nu}_t^k\right| \leq \frac{\bar{\kappa}}{2 \ubarc}\left|\Delta u_t^k(\sS)\right|+\frac{\bar{\kappa}}{2 \ubarc}\left|\Delta u_t^k(\sR)\right|.$$
    \end{itemize}

    \par{\textbf{Step 1 (Contraction boundedness of $h_1$). }}
    We start with two processes $p^{1} = (p^{1,k}_{t}(e))_{t\in[0,T],k\in[K],e\in E}$ and $p^{2} = (p^{2,k}_{t}(e))_{t\in[0,T],l\in[K],e\in E}$, and consider $\Delta p = p^1 - p^2$. First for state $\sS$, by chain rule and plugging in the dynamics of value function and finally by applying Young's inequality, we have 
    $$\begin{aligned}
        &\hskip-2mm\frac{d}{d t}\left\|\Delta u_t(\sS)\right\|^2\\
        &=2 \Delta u_t(\sS) \cdot \Delta \dot{u}_t(\sS) \\
        &=\sum_{k\in[K]} 2\Delta u^{k}_t(\sS)\Big[\beta^k\Big\{\Delta \hat{\alpha}_t^k(\sS) Z_t^{1, k}\left(u_t^{1, k}(\sS)-u_t^{1, k}(\sI)\right)+\hat{\alpha}_t^{2,k}(\sS)\Delta Z_t^{k}\left(u_t^{1, k}(\sS)-u_t^{1, k}(\sI)\right) \\ & \qquad\qquad\qquad\qquad  +\hat{\alpha}_t^{2,k}Z_t^{2,k}\left(\Delta u_t^{k}(\sS) - \Delta u_t^{k}(\sI)\right)\Big\} - c_\lambda^k \Delta \hat{\alpha}_t^k\left(\hat{\alpha}_t^{1, k}+\hat{\alpha}_t^{2, k}-2 \lambda_t^{k, \sS}\right) -c_{\nu}^{k}(\hat{\nu}^{1,k}_{t} + \hat{\nu}^{2,k}_{t}) \Delta \hat{\nu}^{k}_{t}\Big]\\
        &\leq \sum_{k\in[K]} 2 \Delta u^k_t(\sS)\{ [2 \bar{\beta} \bar{Z} \bar{U}+\bar{c}(2+2 \bar{\lambda})]\left|\Delta \hat{\alpha}_t^k(\sS)\right|+2 \bar{c} V\left|\Delta \hat{\nu}_t^k\right| +\bar{\beta} \bar{Z}\left|\Delta u_t^k(\sS)\right|+\bar{\beta} \bar{Z}\left|\Delta u_t^k(\sI)\right|+ 2\bar{U}\left|\Delta Z_t^{k}\right|\} \\
        &\leq \sum_{k\in[K]} 2\left|\Delta u^k_t(\sS)\right|\Bigg\{\Big([2 \bar{\beta} \bar{Z} \bar{U}+\bar{c}(2 +2 \bar{\lambda})] \bar{U} \bar{W} \frac{\bar{\beta}}{\ubar{c}} + 2\bar{U}\bar{W}\Big) \sum_{l \in[K]}\left|\Delta p_t^l(\sI)\right| + \frac{\bar{\kappa} \bar{c} V}{\ubar{c}}\left|\Delta u_t^k(\sR)\right| \\ & \quad\quad +\left(\frac{\bar{\kappa} \bar{c} V}{\ubar{c}}+[2 \bar{\beta} \bar{Z} \bar{U}+\bar{c}(2+2 \bar{\lambda})]\frac{\bar{\beta}}{2\ubar{c}}\bar{Z}+\bar{\beta}\bar{Z}\right) \left|\Delta u_t^k(\sS)\right| + \left([2 \bar{\beta} \bar{Z} \bar{U}+\bar{c}(2+2 \bar{\lambda})]\frac{\bar{\beta}}{2\ubar{c}}\bar{Z}+\bar{\beta}\bar{Z}\right) \left|\Delta u_t^k(\sI)\right| \Bigg\} \\ & 
        \leq d_1^{u}\sum_{k\in[K]}\left|\Delta u_t^{k}(\sS)\right|^2 + d_2^{u}\sum_{k\in[K]}\left|\Delta p_t^{k}(\sI)\right|^2 + d_3^{u}\sum_{k\in[K]}\left|\Delta u_t^{k}(\sI)\right|^2 + d_4^{u}\sum_{k\in[K]}\left|\Delta u_t^{k}(\sR)\right|^2, 
\end{aligned}
    $$
    where $d_1^u := \frac{3\bar{\kappa} \bar{c} V}{\ubar{c}}+[2 \bar{\beta} \bar{Z} \bar{U}+\bar{c}(2 +2 \bar{\lambda})] \frac{3\bar{\beta}}{2\ubar{c}} \bar{Z}+3 \bar{\beta}  \bar{Z} + [2 \bar{\beta} \bar{Z} \bar{U}+\bar{c}(2+2 \bar{\lambda})]\frac{\bar{\beta}}{\ubar{c}}\bar{U}\bar{W} + 2\bar{U}\bar{W}$, $d_2^{u}:=\big([2 \bar{\beta} \bar{Z} \bar{U}+\bar{c}(2+2 \bar{\lambda})] \bar{U} \bar{W} \frac{\bar{\beta}}{\ubar{c}} + 2\bar{U}\bar{W}\big)K$, $d_3^u := [2 \bar{\beta} \bar{Z} \bar{U}+c(2+2 \bar{\lambda})] \frac{\bar{\beta}}{2\ubar{c}}\bar{Z}+\bar{\beta}\bar{Z}$ and $d_4^{u}:=\frac{\bar{\kappa} \bar{c} V}{\ubar{c}}$. Similarly for state $\sI$ and state $\sR$, it follows
    $$\begin{aligned}
& \frac{d}{d t}\left\|\Delta u_t(\sI)\right\|^2 \leq 3 \bar{\gamma} \sum_{k\in[K]}\left|\Delta u_t^t(\sI)\right|^2+\bar{\gamma} \sum_{k\in[K]}\left|u_t^k(\sR)\right|^2, \\
& \frac{d}{d t}\left\|\Delta u_t(\sR)\right\|^2\leq3 \bar{\eta} \sum_{k\in[K]}\left|\Delta u_t^k(\sR)\right|^2+\bar{\eta} \sum_{k\in[K]}\left|\Delta u_t^k(\sS)\right|^2.
\end{aligned}
    $$
    We combine the three states to have
    $$\begin{aligned}
\frac{d}{d t}\left\|\Delta u_t\right\|^2 &= \frac{d}{d t}\left(\left\|\Delta u_t(\sS)\right\|^2+\left\|\Delta u_t(\sI)\right\|^2+\left\|\Delta u_t(\sR)\right\|^2\right) \\
& \leq \left(d_1^u+\bar{\eta}\right) \sum_{k\in[K]}\left(\Delta u_t^k(\sS)\right)^2+\left(d_3^u+3 \bar{\gamma}\right) \sum_{k\in[K]}\left(\Delta u_t^k(\sI)\right)^2 +\left(d_4^u+\bar{\gamma}+3 \bar{\eta}\right) \sum_{k\in[K]}\left(\Delta u_t^k(\sR)\right)^2\\ & \quad+d_2^u \sum_{k\in[K]}\left(\Delta p_t^k(\sI)\right)^2 \\ &\leq A_1 \| \Delta u_t\|^2 + A_2 \|\Delta p_t \|^2,
\end{aligned}$$
where $A_1 := \max\{d_1^u+\bar{\eta}, d_3^u+3 \bar{\gamma}, d_4^u+\bar{\gamma}+3 \bar{\eta}\}$ and $A_2 := d_2^{u}$. Then apply Gr\"{o}nwall's inequality to have
    $$\left\|\Delta u_t\right\|^2 \leq A_2 \int_t^T e^{A_1(t-s)}\left\|\Delta p_s\right\|^2 d s \leq A_2 e^{A_{1}T} \int_0^{T}\|\Delta p_s\|^2ds.$$

    \par{\textbf{Step 2 (Contraction boundedness of $h_2$). }} For the two processes $\tilde{p}^1 := h(p^1)$ and $\tilde{p}^2 := h(p^2)$, denote $\Delta \tilde{p} = \tilde{p}^1 - \tilde{p}^2$. First consider state $\sS$, by chain rule and plugging in the dynamics of state density flow and finally by applying Young's inequality, we have
    $$\begin{aligned}
        \frac{d}{dt} \left\|\Delta \tilde{p}_t(\sS)\right\|^2 &= 2\Delta \tilde{p}_t(\sS)\cdot\Delta \dot{\tilde{p}}_t(\sS) \\ &= 
\sum_{k\in[K]} 2 \Delta \tilde{p}_t^k(\sS) \Big[\sum_{l \in[K]} w(k, l) \lambda_t^{l, \sI} m^l\left(-\beta^k \hat{\alpha}_t^{1,k}(\sS) \tilde{p}_t^{1,l}(\sI) \tilde{p}_t^{1,k}(\sS)+\beta^{k} \hat{\alpha}_t^{2, k}(\sS) \tilde{p}_t^{2,l}(\sI) \tilde{p}_t^{2, k}(\sS)\right) \\ &\quad \quad -\kappa^k\left(\hat{\nu}_t^{1,k} \Delta \tilde{p}_t^k(\sS)+\tilde{p}_t^{2, k}(\sS) \Delta \hat{\nu}_t^k\right)+\eta^k \Delta \tilde{p}_t^k(\sR)\Big] \\ &\leq 
\sum_{k\in[K]} 2\left|\Delta \tilde{p}_t^k(\sS)\right|\Big[\bar{W} \bar{\beta} \sum_{l\in[K]}\left(\left|\Delta \tilde{p}_t^k(\sS)\right|+\left|\Delta \tilde{p}_t^l(\sI)\right|+|\Delta \hat{\alpha}_t^k(\sS)|\right)+\bar{\kappa} V\left|\Delta \tilde{p}_t^k(\sS)\right| +\bar{\kappa}\left|\Delta \hat{\nu}_t^k\right|+\bar{\eta}\left|\Delta \tilde{p}_t^k(\sR)\right|\Big]
\\ & \leq \sum_{k\in[K]} \Big(3K \bar{W} \bar{\beta} +2 \bar{\kappa} V+K^2 \bar{W}^2\bar{U}\frac{\bar{\beta}^2}{\ubar{c}}+K \bar{W} \bar{Z}\frac{\bar{\beta}^2}{\ubar{c}}+\frac{\bar{\kappa}^2}{\ubar{c}}+\bar{\eta}\Big)\left|\Delta \tilde{p}_t^k(\sS)\right|^2 + \Big(K\bar{W}\bar{\beta} + K^2\bar{W}^2\bar{U}\frac{\bar{\beta}^2}{\ubar{c}}\Big)\left|\Delta \tilde{p}_t^k(\sI)\right|^2 \\ & \quad\quad + \bar{\eta} \left|\Delta \tilde{p}_t^k(\sR)\right|^2 + \Big(K \bar{W} \bar{Z} \frac{\bar{\beta}^2}{2 \ubar{c}}+\frac{\bar{\kappa}^2}{2 \ubar{c}}\Big)|\Delta u_t^{k}(\sS)|^2 + K \bar{W} \bar{Z} \frac{\bar{\beta}^2}{2\ubar{c}}\left|\Delta u_t^k(\sI)\right|^2+\frac{\bar{\kappa}^2}{2 \ubar{c}}\left|\Delta \tilde{u}_t^k(\sR)\right|^2.
    \end{aligned}$$
For state $\sI$ and $\sR$, we similarly have
$$\begin{aligned}
    \frac{d}{dt} \left\|\Delta \tilde{p}_t(\sI)\right\|^2 &= 2\Delta \tilde{p}_t(\sI)\cdot\Delta \dot{\tilde{p}}_t(\sI) \\ &\leq \sum_{k\in[K]} \Big(3K\bar{W} \bar{\beta}+2K^2\bar{W}^2 \bar{\beta}^2 \frac{\bar{U}}{\ubar{c}} +K\bar{W} \bar{\beta}^2 \frac{\bar{Z}}{\ubar{c}}+2 \bar{\gamma}\Big)\left|\Delta \tilde{p}_t^k(\sI)\right|^2 + K\bar{W} \bar{\beta} \left|\Delta \tilde{p}_t^k(\sS)\right|^2 \\ &\quad\quad + K\bar{W} \bar{\beta}^2 \frac{\bar{Z}}{2\ubar{c}} \left|\Delta u_t^k(\sS)\right|^2 + K\bar{W} \bar{\beta}^2 \frac{\bar{Z}}{2\ubar{c}}\left|\Delta u_t^k(\sI)\right|^2,  \\ \frac{d}{dt} \left\|\Delta \tilde{p}_t(\sR)\right\|^2 &= 2\Delta \tilde{p}_t(\sR)\cdot\Delta \dot{\tilde{p}}_t(\sR) \\ &\leq \sum_{k\in[K]}\Big(\bar{\gamma}+\frac{\bar{\kappa}^2}{\ubar{c}}+\bar{\kappa} V+2 \bar{\eta}\Big)\left|\Delta \tilde{p}_t^k(\sR)\right|^2+\frac{\bar{\kappa}^2}{2 \ubar{c}}\left|\Delta u_t^k(\sS)\right|^2+\frac{\bar{\kappa}^2}{2 \ubar{c}}\left|\Delta u_t^k(\sR)\right|^2 + \bar{\kappa}V \left|\Delta \tilde{p}_t^k(\sS)\right|^2 + \bar{\gamma} \left|\Delta \tilde{p}_t^k(\sI)\right|^2.
\end{aligned}$$
We combine the three inequalities to have
$$\begin{aligned}
    \frac{d}{d t}\left\|\Delta \tilde{p}_t\right\|^2&=\frac{d}{d t}\left(\left\|\Delta \tilde{p}_t(\sS)\right\|^2+\left\|\Delta \tilde{p}_t(\sI)\right\|^2+\left\|\Delta \tilde{p}_t(\sR)\right\|^2\right)\\ &\leq d_1^p \sum_{k\in[K]}\left|\Delta \tilde{p}_t^k(\sS)\right|^2+d_2^p \sum_{k\in[K]}\left|\Delta \tilde{p}_t^k(\sI)\right|^2+d_3^p \sum_{k\in[K]}\left|\Delta \tilde{p}_t^k(\sR)\right|^2+d_4^p \sum_{k\in[K]}\left|\Delta u_t^k(\sS)\right|^2 \\ & \quad\quad +d_5^p \sum_{k\in[K]}\left|\Delta u_t^k(\sI)\right|^2 + d_6^p \sum_{k\in[K]}\left|\Delta u_t^k(\sR)\right|^2  \\ & \leq B_1 \|\Delta \tilde{p}_t\|^2 + B_2 \|\Delta u_t\|^2,
\end{aligned}$$
where $d_1^p := 4 K\bar{W} \bar{\beta}+3 \bar{\kappa} V+\bar{\eta}+\frac{\bar{\kappa}^2}{c}+K\bar{W} \bar{\beta}^2 \frac{\bar{Z}}{\ubar{c}}+K^2\bar{W}^2 \bar{\beta}^2 \frac{\bar{U}}{\ubar{c}}$, $d_2^p := 4 K\bar{W} \bar{\beta}+3 \bar{\gamma}+K\bar{W} \bar{\beta}^2 \frac{\bar{Z}}{\ubar{c}}+3K^2 \bar{W}^2 \bar{\beta}^2 \frac{\bar{U}}{\ubar{c}}$, $d_3^p:=3\bar{\eta}+\bar{\gamma}+\bar{\kappa}V+\frac{\bar{\kappa}^2}{\ubar{c}}$, $d_4^p:=K\bar{W} \bar{\beta}^2 \frac{\bar{Z}}{\ubar{c}} +\frac{\bar{\kappa}^2}{\ubar{c}}$, $d_5^p:=K\bar{W} \bar{\beta}^2 \frac{\bar{Z}}{\ubar{c}}$, $d_6^p:=\frac{\bar{\kappa}^2}{\ubar{c}}$; $B_1:=\max \left\{d_1^p, d_2^p, d_3^p\right\}$ and $B_2:=\max \left\{d_4^p, d_5^p, d_6^p\right\}$. Apply Gr\"{o}nwall's inequality to have $$\left\|\Delta \tilde{p}_t\right\|^2 \leq B_2 \exp \left(B_1 T\right) \int_0^T\left\|\Delta u_s\right\|^2 d s$$ Plug in the result from step 1 to achieve
$$\left\|\Delta \tilde{p}_t\right\|^2 \leq B_2 \exp \left(B_1 T\right) \int_0^T A_2 \exp(A_{1}T) \int_0^T\left\|\Delta p_s\right\|^2 d s d v$$
Take supremum over $t$ on both sides and simplify to conclude
$$\|\Delta \tilde{p}\|_T = \sup_{t\in [0,T]}\|\Delta\tilde{p}_t\|^2 \leq A_2 B_2 \exp(A_{1}T) \exp \left(B_1 T\right) T^2 \sup _t\|\Delta p_t\|^2 =: C_1 \|\Delta p\|_T$$
Under small time horizon $T$, there is $C_1 = A_2 B_2 \exp \left((A_1 + B_1) T\right) T^2 < 1$ and $h$ is a contraction mapping. By Banach fixed point theorem, we conclude the unique existence of the FBODE solution. This completes the proof. We remark that if one only aims to establish the existence of a Nash equilibrium, a milder small-time assumption ensuring function continuity would suffice, and the result could then be obtained by the Schauder fixed-point theorem.
\end{proof}

\section{Numerical Algorithm}
\label{subsec:alg}
Following the theoretical results presented in Section~\ref{sec:sir_theoretical_result}, in order to find a multi-population MFG Nash equilibrium for our model, we will need to solve the FBODE system given in Theorem~\ref{the:fbode_sir}. We will solve these coupled forward and backward equations by implementing a fixed point algorithm as explained in the main text, see Algorithm~\ref{algo:vacc}. In particular, the state distribution dynamics follow the forward component of the ODE system and they are initialized with the given initial state densities. On the other hand, the value function dynamics follow the backward component of the ODE system and they are set to zeros for all states since there is no terminal cost. Since aggregate depends only on state distribution and known model parameters, we could decouple it from equilibrium socialization levels and vaccination rates, and compute each in order. Then we iteratively update each variable until convergence.
{
\begin{algorithm}
\caption{ Multi-population MFG Nash Equilibrium \label{algo:vacc}}

\textbf{Input:} Time horizon: $T$; Time increments: $\Delta t$; Initial state distributions in group $k\in[K]$: $p^k(\cdot)$; Model parameters: $\beta^k,\kappa^k,\gamma^k, c_{\lambda}^k, c_{\nu}^k, c_{\sI}^k, \blambda = (\lambda_t^{k,\sS}, \lambda_t^{k,\sI}, \lambda_t^{k,\sR})_{t\in\{0, \Delta t, 2\Delta t, \dots, T\}}$ for $k \in [K]$; Group connection matrix: $(w(k,l))_{k,l \in [K]}$;  Convergence parameter: $\epsilon$.\vskip1mm

\textbf{Output:} Nash equilibrium contact levels and vaccination decisions for representative individuals in each group $k\in[K]$: $\hat{\balpha}^k$, $\hat{\bnu}^k$; State distributions at Nash equilibrium in each group $k\in[K]$: $\bp^k$.

\vskip2mm

\begin{algorithmic}[1]
\STATE Initialize ${\bp}^{k,(0)}(e)=({p}_0^{k,(0)}(e),p_{\Delta t}^{k,(0)}(e),\dots, p_{T}^{k,(0)}(e))$ and ${\bu}^{k,(0)}(e)=({u}_0^{k,(0)}(e),u_{\Delta t}^{k,(0)}(e),\dots, u_{T}^{k,(0)}(e))$ \\ for $k\in[K], e\in\{\sS, \sI, \sR\}$.\vskip2mm
\WHILE{$\lVert \bp^{k,(j)}- \bp^{k,(j-1)}\rVert>\epsilon$ \OR $\lVert \bu^{k,(j)}- \bu^{k,(j-1)}\rVert>\epsilon$, 
 for any $k\in[K]$}\vskip2mm
    \STATE{Calculate $Z_t^{k, (j)}$ for all $k\in[K]$ by using the last equation in FBODE~\eqref{eq:FBODE} and equation~\eqref{eq:alphaI}.}\vskip1.5mm
    \STATE{Calculate best response controls: $(\alpha_t^{k,(j)}(\sS), \alpha_t^{k,(j)}(\sI), \alpha_t^{k,(j)}(\sR), \nu_t^{k,(j)})$ by plugging in $\bu^{k,(j)}$, $\bZ^{k,(j)}$ in equations~\eqref{eq:alphaS}-\eqref{eq:Nu} for all $k\in[K]$.}\vskip1.5mm
    \STATE{Update $\bp^{k, (j+1)}$ by solving the forward ODEs in~\eqref{eq:FBODE} by using $\balpha^{(j)}, \bZ^{(j)}, \bu^{(j)}$ for all $k\in[K]$.}\vskip1.5mm
    \STATE{Update $\bu^{k, (j+1)}$ by solving the backward ODEs in~\eqref{eq:FBODE} by using $\balpha^{(j)}, \bZ^{(j)}, \bp^{(j)}$ for all $k\in[K]$.}\vskip1.5mm
\ENDWHILE
\vskip2mm
\STATE{Calculate $\hat{\bZ}^{k}$ for all $k\in[K]$ by using the last equation in FBODE~\eqref{eq:FBODE} and equation~\eqref{eq:alphaI}.}\vskip2mm
\STATE{Calculate best response controls: $\hat{\balpha}^k, \hat{\bnu}^k$ by plugging in $\bu^{k,(j+1)}$, $\hat{\bZ}$ in equations~\eqref{eq:alphaS}-\eqref{eq:Nu} for all $k\in[K]$.}\vskip2mm
\RETURN $(\hat{\balpha}^k, \hat{\bnu}^k, \bp^{k,(j+1)})_{k\in[K]}$.
\end{algorithmic}
\end{algorithm}}

\section{Population Structure and Model Parameterization}

\noindent\textbf{Population Structure:} Stratifying the population by income and perceived authority revealed consistent trends across the survey questions that guided the model construction and parametrization. Details on how we defined the different population groups are stated below.\\
\vspace{-6pt}

\noindent\textit{Income classification}: We used survey outcomes to classify individuals into different economic statuses, using the question `Which of the following best describes your yearly combined household income from all sources?'. Responses from this question were then assigned into \textit{low, middle,} and \textit{high} income categories according to the following brackets respectively: $\$49,999$ or less, $\$50,000-\$99,999$, and $\$100,000$ or more.\\
\vspace{-6pt}

\noindent\textit{Perception of authority}: Using the question `Which of the following best describes your political ideology?' we classified individuals as \textit{follower} if their answer was `Liberal', and \textit{indifferent} if they answered `Conservative', based on the distrust of the government (Table~\ref{table:costVac}). We filtered out the following responses due to small sample sizes and heterogeneity that prevented alignment with our binary classification: `left-wing, but I do not consider myself a liberal', `Right-wing, but I do not consider myself a conservative' and `Other'. Following these exclusions, 8,991 respondents remained for analysis. Based on the income classification and perception of authority, the numbers and proportions of the six distinct groups are show in Table \ref{table:groups}.

\begin{table}[h!]
    \footnotesize
    \centering
    \begin{tabular}{l|l}
    \toprule
        Groups & Respondents (percentage) \\ 
        \midrule \midrule 
        Low SES -- Follower (LF) & 802 (14.7 $\%$) \\ \hline
        Low SES -- Indifferent (LI) & 953 (17.5 $\%$) \\ \hline
        Mid SES -- Follower (MF) & 822 (15.1 $\%$)  \\ \hline
        Mid SES -- Indifferent (MI) & 901 (16.6 $\%$)\\ \hline
        High SES -- Follower (HF) & 1012 (18.6 $\%$) \\ \hline
        High SES -- Indifferent (HI) & 954 (17.5 $\%$)\\ \hline
    \bottomrule 
    \end{tabular}
    \caption{Distribution of respondents by socioeconomic group (SES) and perception of authority}
    \vspace{-0.1cm} 
    \label{table:groups}
\end{table}

\noindent\textbf{Model parametrization}: Parameters were determined through a combination of survey indicators, literature review, and assumptions (Table \ref{table:params}). The survey results were either used as observed values or scaled to illustrate differences between groups, rather than as absolute measurements to account for factors not captured by the study.\\

\begin{table}[h!]
    \footnotesize
    \centering
    \begin{tabular}{l|l|l|l|l|l|l|l|l}
    \toprule
        Parameter & Description & LF & LI & MF & MI & HF & HI & Reference \\ \midrule \midrule 
        $\beta$ &  Base transmission rate & 0.4 & 0.4 & 0.35 & 0.35 & 0.3 & 0.3 & \cite{Liu2020, Mena2021} \\ \hline
        $\gamma$ & Recovery rate (1/days) & 0.143 & 0.143 & 0.143 & 0.143 & 0.143 & 0.143 & \cite{VanKampen2021} \\ \hline
        $\eta$ & Waning of immunity rate (1/days) & 0.004 & 0.004 & 0.004 & 0.004 & 0.004 & 0.004 & \cite{Abu-Raddad2021, Hall2021} \\ \hline        
        $I_0$ & Initial proportion of infected individuals & 1\% & 1\% & 1\% & 1\% & 1\% & 1\% & -- \\ \hline
        $\kappa$ & Vaccine effectiveness rate & 0.03 & 0.03 & 0.03 & 0.03 & 0.03 & 0.03 & --  \\ \hline
        $c_{\sI}$ & Cost of treatment & 1.05 & 1.05 & 1 & 1 & 0.8 & 0.8 & Survey \\ \hline
        $c_\nu$ & Cost of vaccination & 1.4 & 1.6 & 1.2 & 1.4 & 0.8 & 1 & Survey\\ \hline
        $c_\lambda$ & \vtop{\hbox{\strut Importance given to following social distancing}\hbox{\strut guidelines or maximum socialization level} \hbox{\strut when susceptible}} & 1 & 1 & 1 & 1 & 1 & 1 & Survey\\ \hline
        $\xi_{\sI}$ & Sickness related intrinsic socialization & - & 0.97 & - & 0.97 & - & 0.97 & Survey\\ \hline
    \bottomrule 
    \end{tabular}
    \caption{Parameter symbol, description, and reference for each group listed Table \ref{table:groups}.}
    \vspace{-0.1cm} 
    \label{table:params}
\end{table}

\noindent\textit{Disease dynamics:} The values of $\beta$, $\gamma$, and $\eta$ are based on initial estimates of SARS-CoV-2 \cite{Liu2020, VanKampen2021, Abu-Raddad2021, Hall2021}. The infection rates ($\beta$) were adjusted to account for differences in exposure and disease risk across socioeconomic groups \cite{Mena2021}, while the recovery ($\gamma$) and waning of immunity ($\eta$) rates are taken to be independent of economic status and authority perception.\\
\vspace{-7pt}

\noindent\textit{Vaccine effectiveness rate $\kappa$:} This rate is assumed to be consistent across SES, and its value was selected to obtain infection outcomes that are consistent with those observed in the presence of vaccination.\\
\vspace{-7pt}

\noindent\textit{Cost of treatment $c_{\sI}$:} This parameter was derived from responses related to financial or access issues (lack of money, time, or transportation) to the question `Which of the following statements describes the main reasons you did not receive medical care?'. We assumed that lack of resources is independent of authority perception, resulting in 6 respondents in the low-income group, 5 in the mid-income group, and 1 in the high-income group for this question. While these numbers are low, they still suggest a trend at the socioeconomic level, and thus, we used these values to parametrize $c_{\sI}$ such that compared to the middle group, the low group has cost of treatment 5\% higher, and the high group 20\% lower, representing the observed differences. \\ \vspace{-7pt}

\noindent\textit{Cost of vaccination $c_\nu$:} This parameter measures perceived barriers or hesitations towards COVID-19 vaccination, based on responses to the question `Which of the following statements describe your main reasons why you have not been vaccinated and/or boosted?' (Table \ref{table:costVac}). When considering all the reasons, respondents identified as \textit{indifferent} consistently reported higher frequencies of these belief related reasons than \textit{followers} across all income levels, indicating different perceptions of the vaccination cost. Within each perception of authority group, high income respondents showed slightly lower levels of concern and distrust than mid or low income groups. In addition, resources barrier reasons such as lack of money, time, or transportation were more frequently reported among low- and mid-income respondents. Using these responses, we scaled $c_\nu$ values to capture both perceptional and income-related gradients: indifferent assigned higher values than follower and lower-income groups slightly higher than higher-income groups. \vspace{-7pt}
\\

\begin{table}[h!]
    \footnotesize
    \centering
    \begin{tabular}{l|l|l|l|l|l|l}
    \toprule
    Response & LF & LI & MF & MI & HF & HI \\ \hline
    \midrule
    Total & 298 & 501 & 218 & 415 & 131 & 403 \\ \hline
    Distrust of pharmaceutical companies & 31 (10.4\%) & 98 (19.6\%) & 26 (11.9\%) & 104 (25.1\%) & 13 (9.9\%) & 93 (23.1\%) \\ \hline
    Distrust of the government & 37 (12.4\%) & 102 (20.4\%) & 23 (10.6\%) & 106 (25.5\%) & 10 (7.6\%) & 87 (21.6\%) \\ \hline
    I am worried about adverse effects & 77 (25.8\%) & 133 (26.5\%) & 62 (28.4\%) & 138 (33.3\%) & 41 (31.3\%) & 145 (36.0\%) \\ \hline
    I don't believe in vaccines & 19 (6.4\%) & 58 (11.6\%) & 12 (5.5\%) & 29 (7.0\%) & 5 (3.8\%) & 29 (7.2\%) \\ \hline
    Lack of money & 23 (7.7\%) & 24 (4.8\%) & 15 (6.9\%) & 21 (5.1\%) & 4 (3.1\%) & 18 (4.5\%) \\ \hline
    Lack of time & 39 (13.1\%) & 25 (5.0\%) & 35 (16.1\%) & 23 (5.5\%) & 16 (12.2\%) & 23 (5.7\%) \\ \hline
    Lack of transportation & 22 (7.4\%) & 22 (4.4\%) & 7 (3.2\%) & 9 (2.2\%) & 4 (3.1\%) & 10 (2.5\%) \\
    \hline
    \bottomrule
    \end{tabular}
    \caption{Responses to the question `Which of the following statements describe your main reasons why you have not been vaccinated and/or boosted?'}
    \vspace{-0.1cm} 
    \label{table:costVac}
\end{table}

\noindent\textit{Importance given to following social distancing guidelines or socialization when susceptible $c_\lambda$:}  
This parameter represents the importance given by susceptible individuals to maintaining social interactions or adhering to preventive measures depending on their authority perception. It is informed by the question: `Have you changed your behavior (for example, mask-wearing or reducing contact) in response to your concern to COVID-19?' (Table~\ref{table:behav}). On average across income levels, followers were more likely to report behavioral change than indifferent individuals (52\% versus 35\%). Since this difference is already included in the model’s behavioral classification, to prevent double-counting this effect, we set $c_{\lambda} = 1$ uniformly across all  following the results of the survey; however, we emphasize our model allows group-dependent values. 

\begin{table}[h!]
    \footnotesize
    \centering
    \begin{tabular}{l|l|l|l|l|l|l}
    \toprule
    Response & LF & LI & MF & MI & HF & HI \\ \hline
    \midrule
    Yes & 406 (51.6\%) & 341 (36.6\%) & 434 (53.1\%) & 307 (34.4\%) & 528 (52.4\%) & 308 (32.6\%) \\ \hline
    \bottomrule
    \end{tabular}
    \caption{Responses to the question `Have you changed your behavior (for example, mask-wearing or reducing contact) in response to your concern to COVID-19?'}
    \vspace{-0.1cm} 
    \label{table:behav}
\end{table}

\noindent\textit{Sickness-related intrinsic socialization $\xi_{\sI}$:}  
This parameter represents the intrinsic tendency of individuals to reduce social contact when sick and it was informed by the question: `While you were sick with COVID-19, did you reduce your contact with other people?' As shown in Table~\ref{table:behsick}, the majority of respondents across all income and ideology groups reported reducing contact, with the proportion of `Yes' responses ranging from 78\% to 88\%. These results indicate that respondents regardless of income or perception of authority exhibited decreases in socialization when ill. Although the question does not specify the exact extent of their contact reduction, we set the intrinsic socialization level parameter to $\xi_I = 0.97$ for all indifferent groups. 

\begin{table}[h!]
    \footnotesize
    \centering
    \begin{tabular}{l|l|l|l|l|l|l}
    \toprule
    Response & LF & LI & MF & MI & HF & HI \\ \hline
    \midrule
    Yes & 265 (85.5\%) & 241 (78.0\%) & 322 (84.3\%) & 264 (79.0\%) & 441 (88.2\%) & 333 (81.2\%) \\ \hline
    \bottomrule
    \end{tabular}
    \caption{Responses to the question `While you were sick with COVID-19, did you reduce your contact with other people?'}
    \vspace{-0.1cm} 
    \label{table:behsick}
\end{table}

\noindent\textbf{{Contact levels:}} Table \ref{table:connection_matrix} defines the relative contact patterns among the groups. Within each income level, follower and indifferent individuals interact most frequently with others of the same perception of authority group, diagonal elements of the matrix set to 1 to represent full within-group connection. Within the same income level or within the same perception of authority, connections are set to be slightly lower (0.95), and interactions between groups that differ in both income and perception have even lower contact levels (0.9). This symmetric structure reflects the culture independent tendency for individuals to interact most frequently with others of similar socioeconomic status \cite{hilman2022socioeconomic,zwier2023segregation}, and within the ideology group, individuals show stronger social alignment and preference for interaction with members of their own group~\cite{balliet2018political}.

\begin{table}[h!]
    \footnotesize
    \centering
    \begin{tabular}{l|| l|l|l|l|l|l}
         & LF & LI & MF & MI & HF & HI \\ \hline
        \midrule
        LF & 1 & 0.95 & 0.95 & 0.9 & 0.95 & 0.9 \\ \hline
        LI & 0.95 & 1 & 0.9 & 0.95 & 0.9 & 0.95 \\ \hline
        MF & 0.95 & 0.9 & 1 & 0.95 & 0.95 & 0.9 \\ \hline
        MI & 0.9 & 0.95 & 0.95 & 1 & 0.9 & 0.95 \\ \hline
        HF & 0.95 & 0.9 & 0.95 & 0.9 & 1 & 0.95 \\ \hline
        HI & 0.9 & 0.95 & 0.9 & 0.95 & 0.95 & 1 \\ \hline
    \bottomrule
    \end{tabular}
    \caption{Contact matrix}
    \vspace{-0.1cm}
    \label{table:connection_matrix}
\end{table}

\section{Sensitivity analysis and additional simulation results for the SIR-type model}

In this section, we present more sensitivity analysis results on the SIR model parameters. In the experiments, by default we use permissive social distancing guidelines unless otherwise is specified. Figure~\ref{fig:supp_xi} compares two intrinsic social interaction levels of authority indifferents (i.e., sensitivity analysis on $\xi_{\sI}$). When authority indifferents maintain a higher intrinsically intrinsic socialization level, individuals in the society feel unsafe and would like to lower their socialization activities and elevate vaccination uptake. Despite this, we can see a higher infection curve for all groups. Furthermore, the individuals who are authority followers and lower income status have the most severe infection outbreak. Following this observation, for a population where infected and authority indifferent individuals have high intrinsic socialization level, stricter public health policies could be imposed to offset the infection impact they bring.

Figure~\ref{fig:supp_kappa} compares two levels of vaccination efficacies (i.e., sensitivity analysis on $\kappa$). Higher efficacy substantially motivates higher levels of vaccination uptake across all groups, which in turn encourages higher socialization levels. Overall, enhanced vaccine efficacy benefits the entire population by effectively mitigating infection spread. This shows that a public health authority can prioritize the scientific investments to increase in vaccination efficacy or focus on broadcasting the vaccine's efficacy to a wider audience as a mitigation policy.

Figure~\ref{fig:supp_c_inf} compares the outcomes under group-adaptive infection costs and uniformly low infection costs (i.e., sensitivity analysis on $c_{\sI}$. For the group-adaptive parameters, lower-income groups face higher infection costs, reflecting their large burden of treatment expense. Under uniformly low infection costs, lower income individuals perceive less anticipated price for getting infected, leading to their higher socialization levels and lower vaccination uptake. In contrast, the strategies of higher-income groups remain mostly unchanged, as vaccination continues to be the most cost-effective choice for them.

Figure~\ref{fig:supp_eta} compares two waning rates of immunity where a higher waning rate of immunity means a higher and earlier risk of reinfection (i.e., sensitivity analysis on $\eta$). As a result of this risk, individuals choose to delay vaccination in the early periods and instead lower their socialization levels over the time interval. Due to the higher risk of infection, the proportions of infected and also susceptible reach higher levels.
Because reinfections are more likely when $\eta$ is higher, vaccination rates remains moderately higher in the later periods for increased protection. Overall, it can be difficult to identify if a disease has high waning rate of immunity at early periods of spread, as infection curves appear similar and the fact that limited patient cases are available during the early periods of epidemics spread in real world. Stricter social-distancing policies or stronger vaccination incentives could therefore help prevent large-scale infections.

\begin{figure*}[t]
    \centering
    \subfloat[\textbf{Sensitivity analysis on $\xi_{\sI}$}]{
        \includegraphics[width=0.45\textwidth]{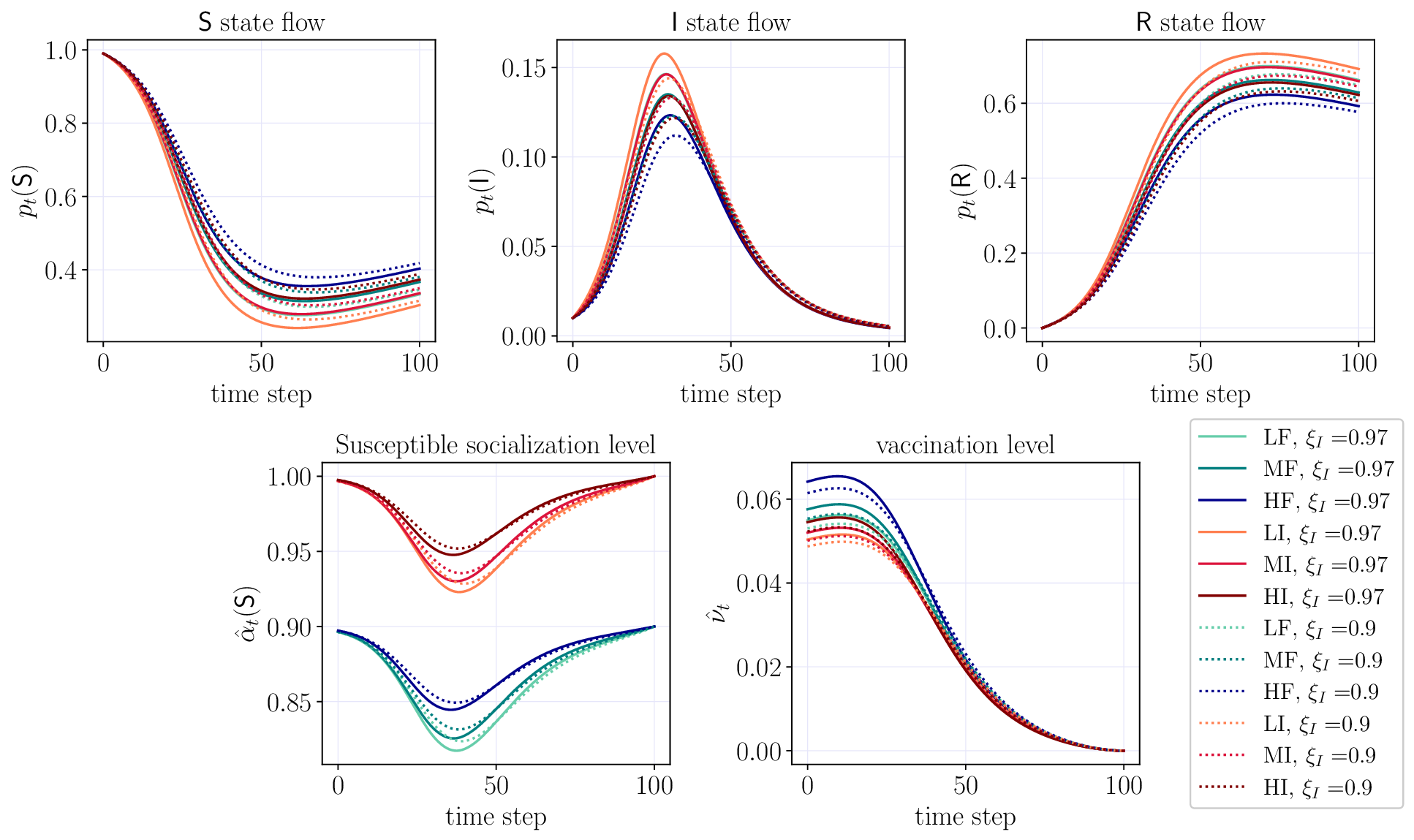}
        \label{fig:supp_xi}
    }
    \hfill
    \subfloat[\textbf{Sensitivity analysis on $\kappa$}]{
        \includegraphics[width=0.45\textwidth]{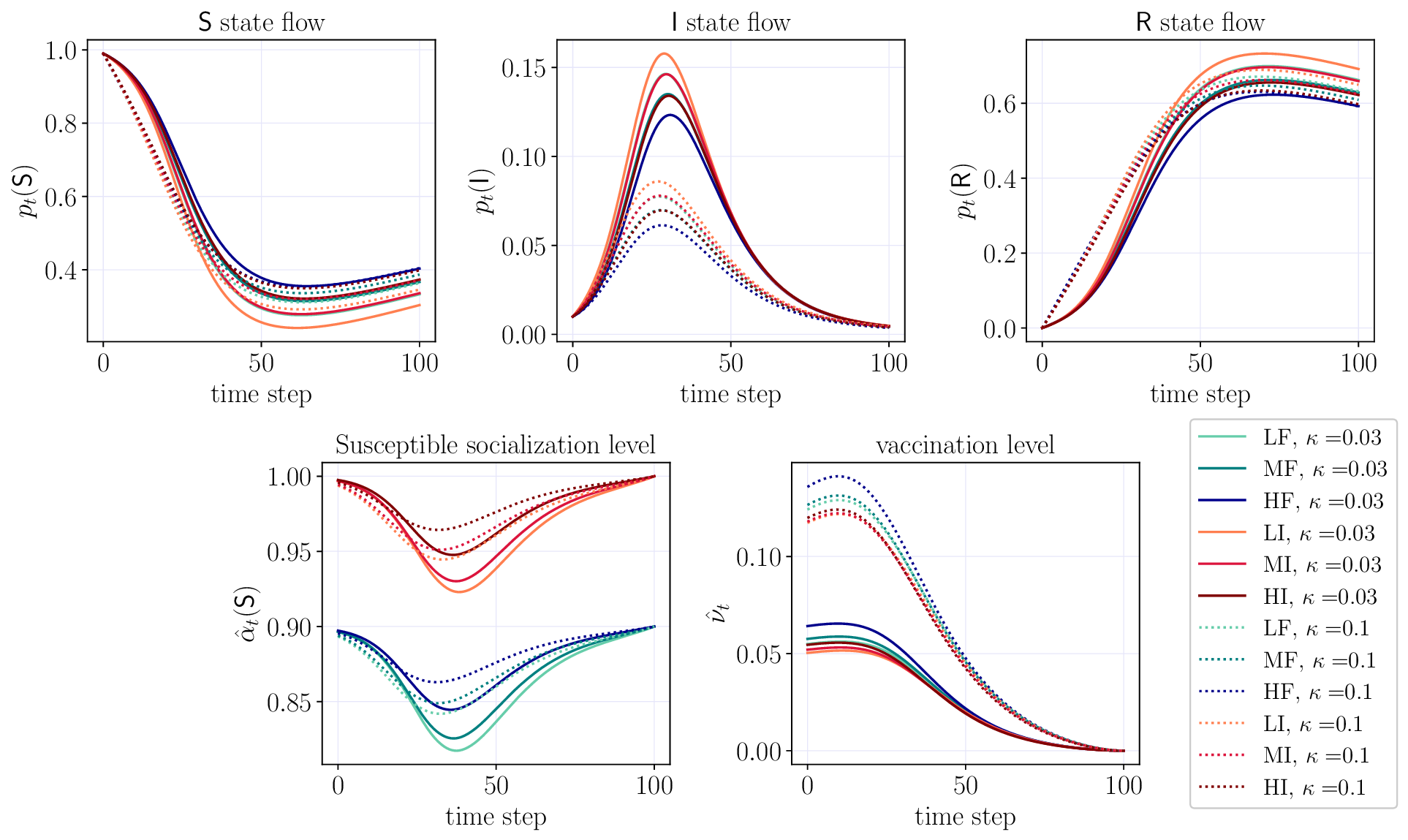}
        \label{fig:supp_kappa}
    }
    \captionsetup{font=footnotesize}
    \caption{
        Sensitivity analysis on SIR model parameters. Left: comparison of epidemics mitigation effects under two authority indifferents intrinsic socialization levels: $\xi_{\sI} = 0.9$; $\xi_{\sI} = 0.97$. Right: comparison of epidemics mitigation effects under two vaccination efficacy levels: $\kappa^k = 0.03$ for all groups; $\kappa^k = 0.1$ for all groups.
    }
    \label{fig:xi_kappa_comparison}
\end{figure*}

\begin{figure*}[t]
    \centering
    \subfloat[\textbf{Sensitivity analysis on $c_{\sI}$}]{
        \includegraphics[width=0.45\textwidth]{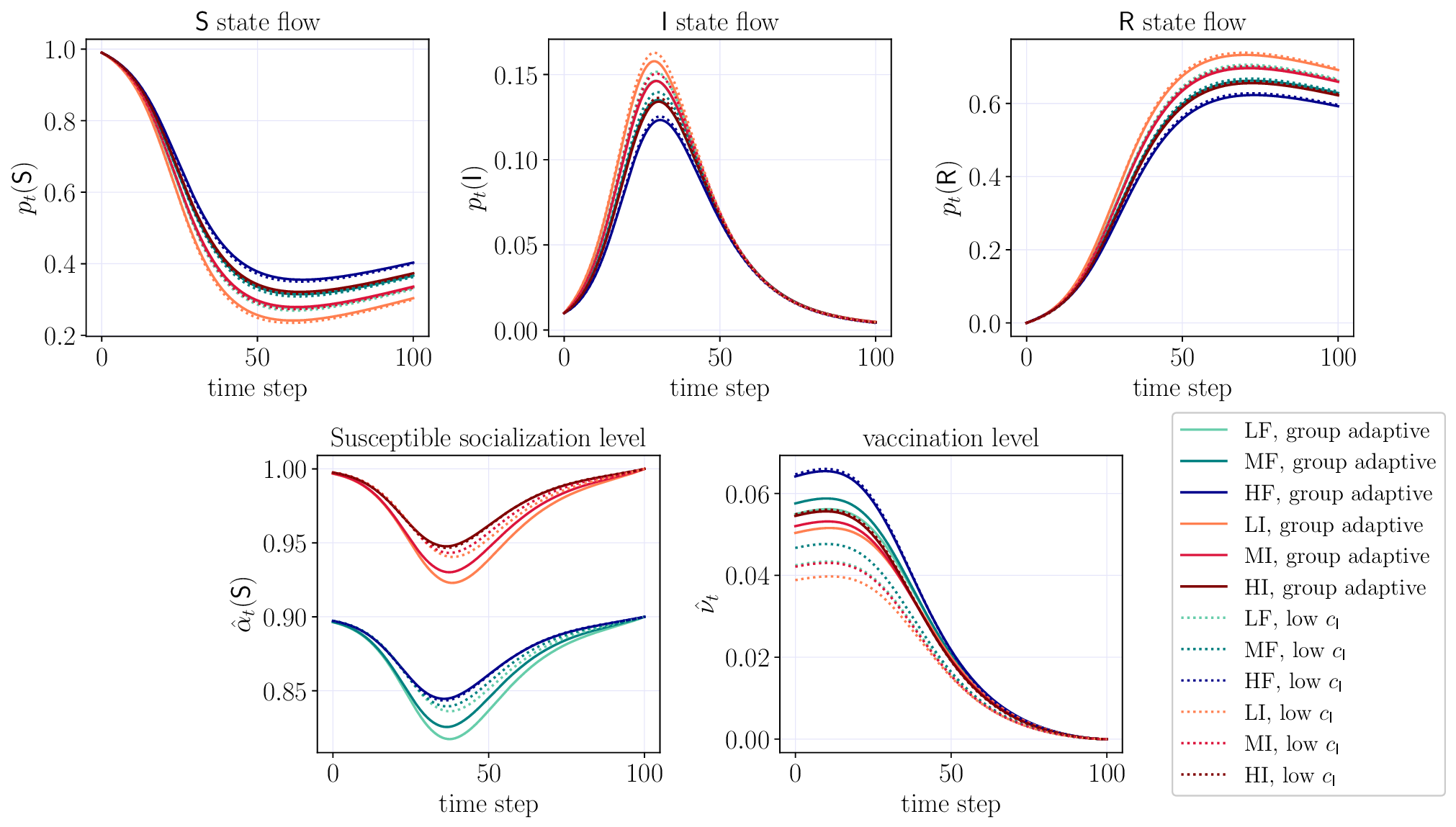}
        \label{fig:supp_c_inf}
    }
    \hfill
    \subfloat[\textbf{Sensitivity analysis on $\eta$}]{
        \includegraphics[width=0.45\textwidth]{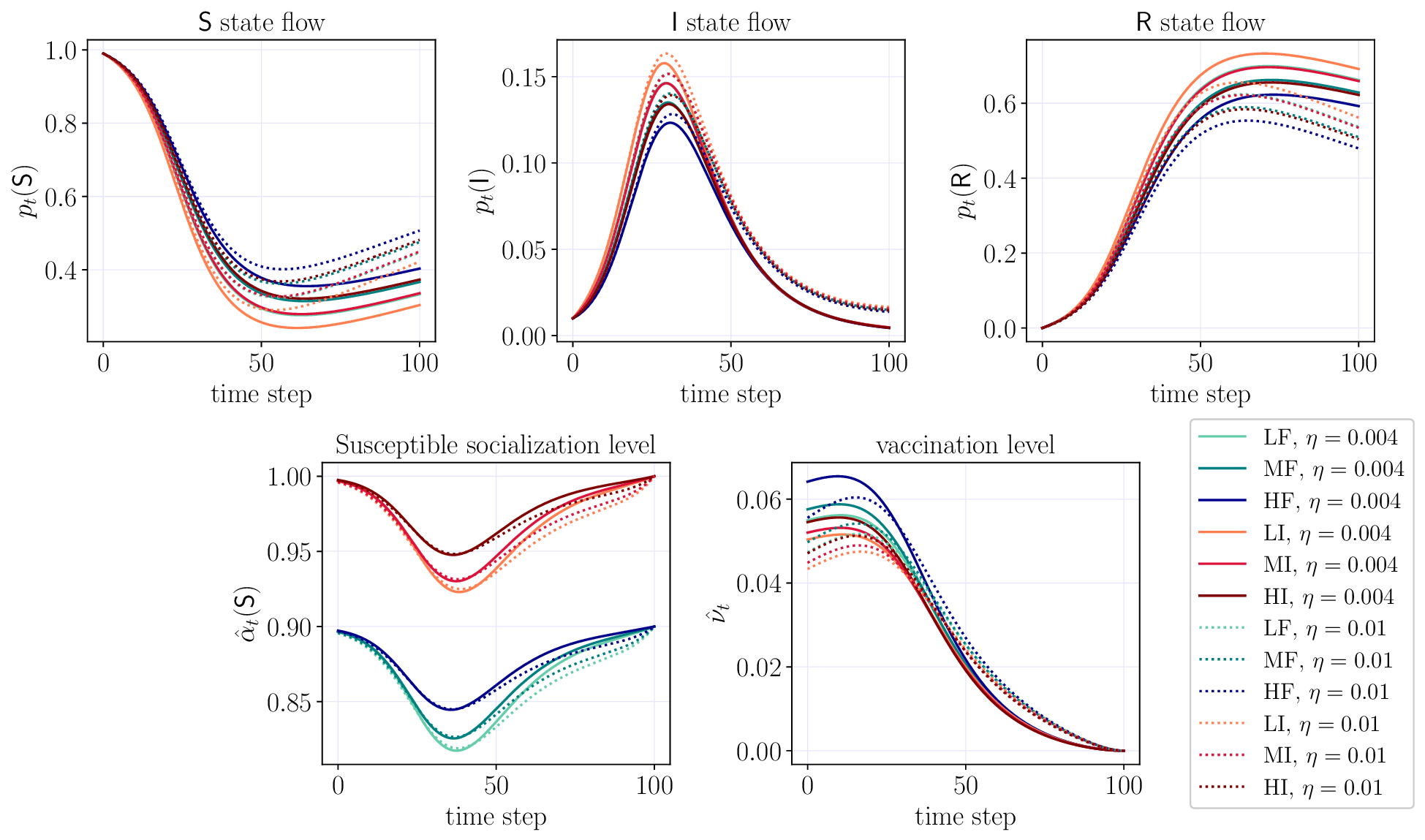}
        \label{fig:supp_eta}
    }
    \captionsetup{font=footnotesize}
    \caption{
    Sensitivity analysis on SIR model parameters. Left: comparison of epidemics mitigation effects under two treatment costs: \textbf{group adaptive}: $c_{\sI}^{\text{LF}} = c_{\sI}^{\text{LI}} = 1.05$, $c_{\sI}^{\text{MF}} = c_{\sI}^{\text{MI}} = 1$, $c_{\sI}^{\text{HF}} = c_{\sI}^{\text{HI}} = 0.8$; \textbf{low $c_\sI$}: $c_{\sI}^k = 0.8$ for all groups. Right: comparison of epidemics mitigation effects under two waning rates: $\eta^{k} = 0.004$; $\eta^{k} = 0.01$ for all groups. 
    }
    \label{fig:ci_eta_comparison}
\end{figure*}

\section{Extension to SIRD-type Model to Include Deceased Status}
\subsection{Model and Equilibrium Characterization}
We also introduce an extension of our model that includes the \textit{Deceased} ($\sD$) state for a possible health status, in other words, we extend our SIR-type model to SIRD-type model. Similar to SIR-type model, the state transitions of the representative individual in group $k\in[K]$ will be modeled by using a continuous-time Markov chain with the following transition rate matrix to include the group-specific \textit{mortality rate}, $\rho^k \in[0,1]$:
\begin{equation}
\label{eq:q_matrix}
\begin{blockarray}{cccccc}
& \sS & \sI & \sR & \sD \\[0.5mm]
\begin{block}{c[ccccc]}
  \sS & \cdots  & \beta^k \alpha_{t}^{k} Z_{t}^{k} & \kappa^k \nu_t^k & 0\\
  \sI & 0  & \cdots & (1-\rho^k)\gamma^k & \rho^k \gamma^k \\
  \sR & \eta^k & 0 & \cdots & 0 \\
  \sD & 0 & 0 & 0 & \cdots \\
\end{block}
\end{blockarray},
\end{equation}
In the extended model an individual (in group $k \in[K]$ in state $\sI$ transitions to either state $\sR$ or $\sD$ after an exponentially distributed time with rates $(1-\rho^k)\gamma^k$ and $\rho^k\gamma^k$, whichever randomly sampled time comes first. We notice that if $\rho^k=0$, the extended model recovers the SIR-type model introduced in the paper.

The individual objectives of the individuals in states $\sS, \sI$, and $\sR$ stays the same. We add a terminal cost for state $\sD$ that is mathematically modeled as with the addition of the term $\mathcal{D}^k \mathbf{1}_{\sD}(X_T^k)$ in the cost functionals. This means that the cost functional for an individual who is authority follower $k\in\{\lf, \mf, \hf\}$ will become:
\begin{equation*}
\begin{aligned}
    &J^k(\balpha^k, \bnu^k; \bZ) = \\ &\qquad \mathbb{E}\Big[\int_0^T \big[\big(c^k_{\lambda} (1 -\alpha_t^k)^2 + c_{\nu}^k (\nu_t^k)^2 \big)\mathbf{1}_{\sS}(X^k_t) 
    + \big((\xi_{\sI} -\alpha_t^k)^2 + c_{\sI}^k\big)\mathbf{1}_{\sI}(X^k_t)+ (1 -\alpha_t^k)^2\mathbf{1}_{\sR}(X^k_t)\big] dt \underbrace{+ \mathcal{D}^k \mathbf{1}_{\sD}(X_T^k)}_{\text{Terminal cost}}\Big]. 
\end{aligned}
\end{equation*}
For the authority indifferent individual the cost functional is defined similarly by adding the terminal cost in their original cost functional. We stress that we do not define social distancing or vaccination controls for the agents who are in state $\sD$ due to its practical meaning.

\begin{theorem}
\label{the:fbode_sird}
    Under assumption~\ref{assu:fbode}, multi-population mean field Nash equilibrium controls are given as
    \begin{align}
        \hat{\alpha}_t^k(\sS) &= \lambda_t^{k, \sS} + \frac{\beta^k Z_t^k \big(u_t^k(\sS) - u_t^k(\sI)\big)}{2c_\lambda^k} \qquad\qquad&& \hat{\alpha}_t^k(\sS) = 1 + \frac{\beta^k Z_t^k \big(u_t^k(\sS) - u_t^k(\sI)\big)}{2c_\lambda^k} \label{eq:alphaS_d}\\
        \hat{\alpha}_t^{k}(\sI) &= \lambda_t^{k, \sI} && \hat{\alpha}_t^{k}(\sI) =\xi_I\label{eq:alphaI_d}\\
        \hat{\alpha}_t^{k}(\sR) &= \lambda_t^{k, \sR}\label{eq:alphaR_d} &&\hat{\alpha}_t^{k}(\sR)=1\\
        \hat{\nu}^k_t &= \underbrace{\frac{\kappa^k \big(u_t^k(\sS) - u_t^k(\sR)\big)}{2c_\nu^k}}_{\textcolor{black}{k\:\in\:\{{\lf,\: \mf,\: \hf}}\}} && \hat{\nu}^k_t= \underbrace{\frac{\kappa^k \big(u_t^k(\sS) - u_t^k(\sR)\big)}{2c_\nu^k}}_{\textcolor{black}{k\:\in\:\{{\li,\: \mi,\: \hi}}\}}\label{eq:Nu_d}
    \end{align}
    for all $k \in[K]$ if $(\bp,\bu)=(\bp^k, \bu^k)_{k\in[K]}$ tuple solves the following FBODE system:
\begin{align*}
\label{eq:FBODE_sird}
    \dot p^k_t(\sS) &= -\beta^k \hat{\alpha}^{k}_t(\sS) Z_t^k p_t^k(\sS) - \kappa^k \hat{\nu}_t^kp_t^k(\sS) + \eta^k p_t^k(\sR) ,\\[1mm]
    \dot p^k_t(\sI) &= \beta^k \hat{\alpha}^{k}_t(\sS) Z_t^k p_t^k(\sS) - \gamma^k p_t^k(\sI),\\[1mm]
    \dot p^k_t(\sR) &= \textcolor{black}{(1-\rho^k)}\gamma^k p_t^k(\sI) +\kappa^k \hat{\nu}_t^kp_t^k(\sS) - \eta^k p_t^k(\sR),
    \\[1mm]
    \textcolor{black}{\dot p^k_t(\sD)} &= \textcolor{black}{\rho^k\gamma^k p_t^k(\sI),}
    \\[1mm]
    \dot u^{k_1}_t (\sS)&= \beta^{k_1} \hat\alpha^{k_1}_t(\sS)Z^{k_1}_t\big(u^{k_1}_t(\sS) - u^{k_1}_t( \sI)\big) + \kappa^{k_1} \hat{\nu}_t^{k_1} \big(u^{k_1}_t(\sS) - u^{k_1}_t( \sR)\big) 
    -c^{k_1}_\lambda \big(\lambda^{{k_1}, \sS}_t-\hat\alpha^{k_1}_t(\sS)\big)^2 - c_\nu^{k_1} (\hat{\nu}_t^{k_1})^2,
    \\[1mm]
    \dot u^{k_2}_t (\sS)&= \beta^{k_2} \hat\alpha^{k_2}_t(\sS)Z^{k_2}_t\big(u^{k_2}_t(\sS) - u^{k_2}_t( \sI)\big) + \kappa^{k_2} \hat{\nu}_t^{k_2} \big(u^{k_2}_t(\sS) - u^{k_2}_t( \sR)\big) 
    -c^{k_2}_\lambda \big(1-\hat\alpha^{k_2}_t(\sS)\big)^2 - c_\nu^{k_2} (\hat{\nu}_t^{k_2})^2,
    \\[1mm]
    \dot u^k_t (\sI)&= \textcolor{black}{(1-\rho^k)}\gamma^k \big(u^k_t(\sI) - u^k_t( \sR)\big) \textcolor{black}{+\rho^k\gamma^k \big(u^k_t(\sI) - u^k_t( \sD)\big)} - c^k_{\sI},
    \\[1mm]
    \dot u^k_t (\sR)&= \eta^k (u_t^k(\sR)-u_t^k(\sS)),
    \\[1mm]
    \textcolor{black}{\dot u^k_t (\sD)}&= \textcolor{black}{0,}
    \\[1mm]
     u^k_T(e)&= 0,\quad \forall e\in \{\sS, \sI, \sR\}, \quad u^k_T(e) = \mathcal{D}^k, \qquad p^k_0(e) = p^k(e),\quad \forall e\in \{\sS, \sI, \sR, \textcolor{black}{\sD}\},  \\[1mm]
     Z^k_t &= \sum_{l \in [K]} w(k, l) \EE[\hat{\alpha}_t^l(\sI)] p_t^l(\sI) m^l, \quad \forall k \in [K],\: \forall k_1\in\{\lf,\mf, \hf\},\: \forall k_2\in\{\li, \mi, \hi\},\: \forall t\in[0,T].
\end{align*}
\end{theorem}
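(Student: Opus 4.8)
The plan is to follow the proof of Theorem~\ref{the:fbode_sir} essentially verbatim, since the only structural change is the addition of the absorbing $\sD$ state, which carries no controls and enters the problem only through a terminal penalty. First I would write the Hamiltonian of the representative individual in group $k\in[K]$ associated with the transition-rate matrix~\eqref{eq:q_matrix},
\begin{equation*}
H^k(t,e,z,\alpha,\nu,u) = \sum_{e'\in\{\sS,\sI,\sR,\sD\}} q^k_t(e,e',z,\alpha,\nu)\,u(e') + f^k(t,e,z,\alpha,\nu),
\end{equation*}
where $f^k$ is exactly the running cost from the SIR case (it never multiplies $u(\sD)$ and vanishes on state $\sD$). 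The key observation is that no transition enters $\sD$ from $\sS$, and the controls $(\alpha,\nu)$ enter only the $\sS$- and $\sI$-rows of~\eqref{eq:q_matrix} in exactly the same way as in the SIR model; hence the map $(\alpha,\nu)\mapsto H^k(t,e,z,\alpha,\nu,u)$ is unchanged on the relevant coordinates, stays strongly convex and additively separable, and its unique minimizer follows from the same first-order conditions, giving precisely~\eqref{eq:alphaS_d}--\eqref{eq:Nu_d}.

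Next I would assemble the coupled HJB/KFP system. By the dynamic programming principle, the value functions satisfy $\dot u^k_t(e) = -H^k\big(t,e,Z^k_t,\hat\alpha^k_t(\cdot),\hat\nu^k_t,u^k_t(\cdot)\big)$ for $e\in\{\sS,\sI,\sR,\sD\}$, which reproduces the four $u$-equations of the theorem: the $\sI$-equation picks up the terms $(1-\rho^k)\gamma^k\big(u^k_t(\sI)-u^k_t(\sR)\big)$ and $\rho^k\gamma^k\big(u^k_t(\sI)-u^k_t(\sD)\big)$ from splitting the $\sI\to\sR$ and $\sI\to\sD$ rates, while the $\sD$-row of~\eqref{eq:q_matrix} is identically zero and $\sD$ carries no running cost, so $\dot u^k_t(\sD)=0$; with the terminal conditions $u^k_T(e)=0$ for $e\in\{\sS,\sI,\sR\}$ and $u^k_T(\sD)=\mathcal{D}^k$ this forces $u^k_t(\sD)\equiv\mathcal{D}^k$. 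Symmetrically, the Kolmogorov--Fokker--Planck equations $\dot p^k_t(e)=\sum_{e'}q^k_t(e',e,Z^k_t,\hat\alpha^k_t,\hat\nu^k_t)\,p^k_t(e')$ give the forward block, with the new absorbing equation $\dot p^k_t(\sD)=\rho^k\gamma^k p^k_t(\sI)$ and with $(1-\rho^k)\gamma^k$ replacing $\gamma^k$ in the inflow to $\sR$. Since $\hat\alpha^k_t(\sS)$ and $\hat\nu^k_t$ depend on $Z^k_t$ and $u^k_t$, the forward and backward blocks are coupled, and the $Z^k_t$ terms couple the groups, which is exactly the FBODE system in the statement.

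I do not expect a genuine obstacle: the argument is a verification that the derivation in the proof of Theorem~\ref{the:fbode_sir} is stable under adjoining an absorbing, control-free state with a terminal cost. The only points needing a line of care are (i) confirming that the Hamiltonian minimization is unaffected, i.e.\ that $u(\sD)$ never appears multiplied by $\alpha$ or $\nu$ --- which holds because the $\sS\to\sD$ rate is zero --- and (ii) keeping the split of the recovery rate $\gamma^k$ into its $\sR$- and $\sD$-channels consistent across the forward and backward equations. One may add, as in the SIR case, that a short-time contraction argument identical to the proof of Theorem~\ref{the:mfg_nash_existence_uniqueness_SIR} carries over, since $u^k(\sD)$ is constant and the extra coordinate $p^k(\sD)$ obeys a Lipschitz ODE driven by $p^k(\sI)$.
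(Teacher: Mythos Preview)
Your proposal is correct and matches the paper's approach exactly: the paper states that the proof ``follows analogously to the SIR case (see proof of Theorem~\ref{the:fbode_sir}), and thus we omit the full derivation for brevity,'' and you have supplied precisely that analogous argument, carefully tracking the modifications introduced by the absorbing $\sD$ state. One minor imprecision: the controls $(\alpha,\nu)$ enter only the $\sS$-row of the transition matrix~\eqref{eq:q_matrix} (not the $\sI$-row), though $\alpha$ does appear in the running cost $f^k$ at states $\sI$ and $\sR$; this does not affect your conclusion.
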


The proof of the theoretical characterization of the multi-population mean field Nash equilibrium in the extended SIRD model with an FBODE system that is presented in~\ref{the:fbode_sird} follows analogously to the SIR case (see proof of Theorem~\ref{the:fbode_sir}), and thus we omit the full derivation for brevity. The existence and uniqueness results, as well as the numerical algorithm used to compute the equilibrium, also extend naturally from the SIR setting with minor modifications. For consistency and comparison, we include the same set of numerical experiments as in the SIRD model, focusing particularly on those that align closely with the SIR experiments presented in the main text.

\subsection{Experiments on SIRD-type model}
For the SIRD model, two additional death-related parameters are introduced. The baseline mortality rate is set to $\rho^{k} = 0.005 (0.5\%)$, and the baseline mortality cost is $\mathcal{D}^{k} = 80$ for all $k\in[K]$. We extend Algorithm~\ref{algo:vacc} by incorporating the dynamics associated with the $\sD$ state and replicate the previous experiments under the SIRD setting to examine the impact of including this additional state.

\begin{figure*}[h]
    \centering
    \subfloat[\textbf{Permissive Policy vs. Adaptive Policy}]{
        \includegraphics[width=0.45\textwidth]{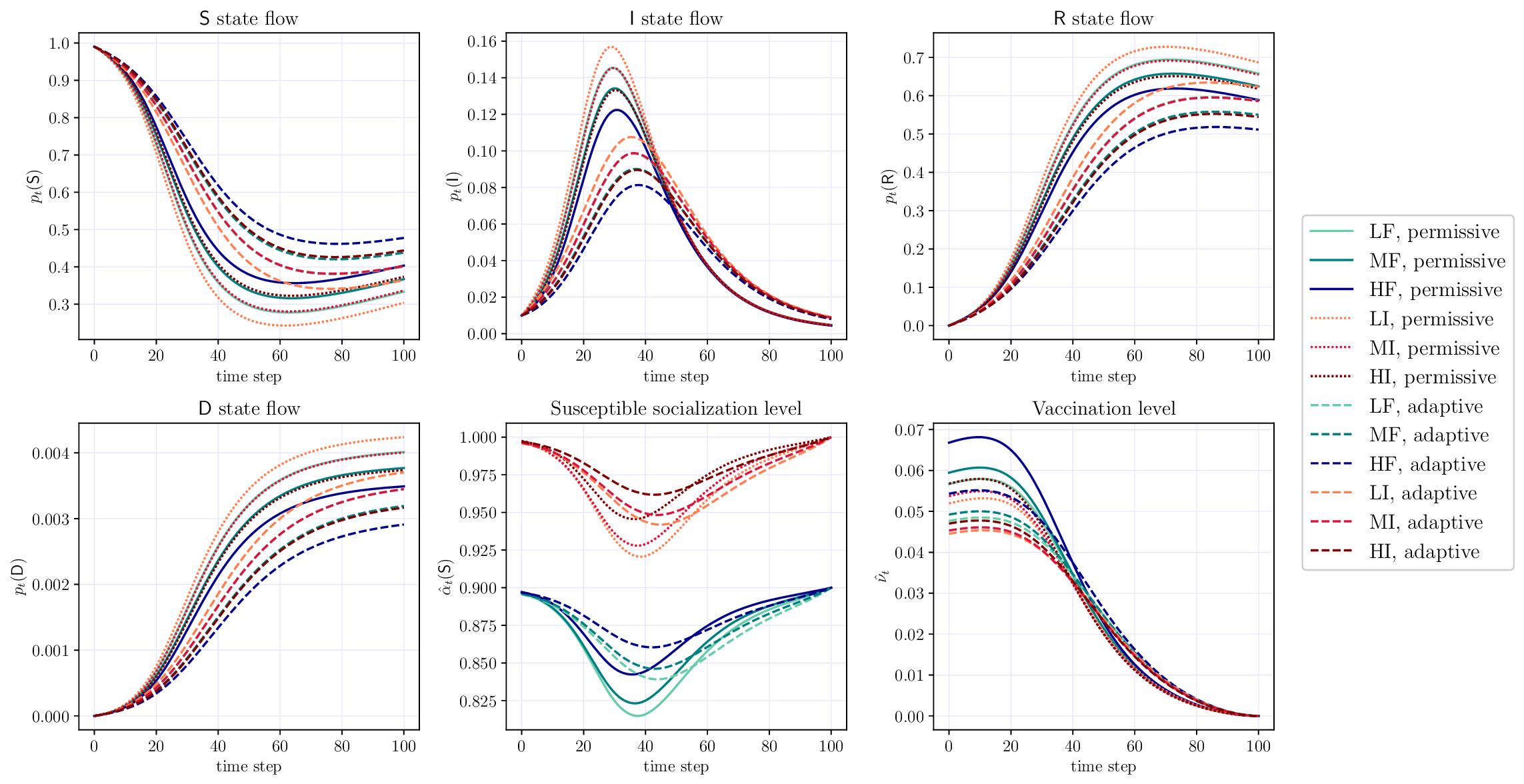}
    }
    \hfill
    \subfloat[\textbf{Permissive Policy vs. Strict Policy}]{
        \includegraphics[width=0.45\textwidth]{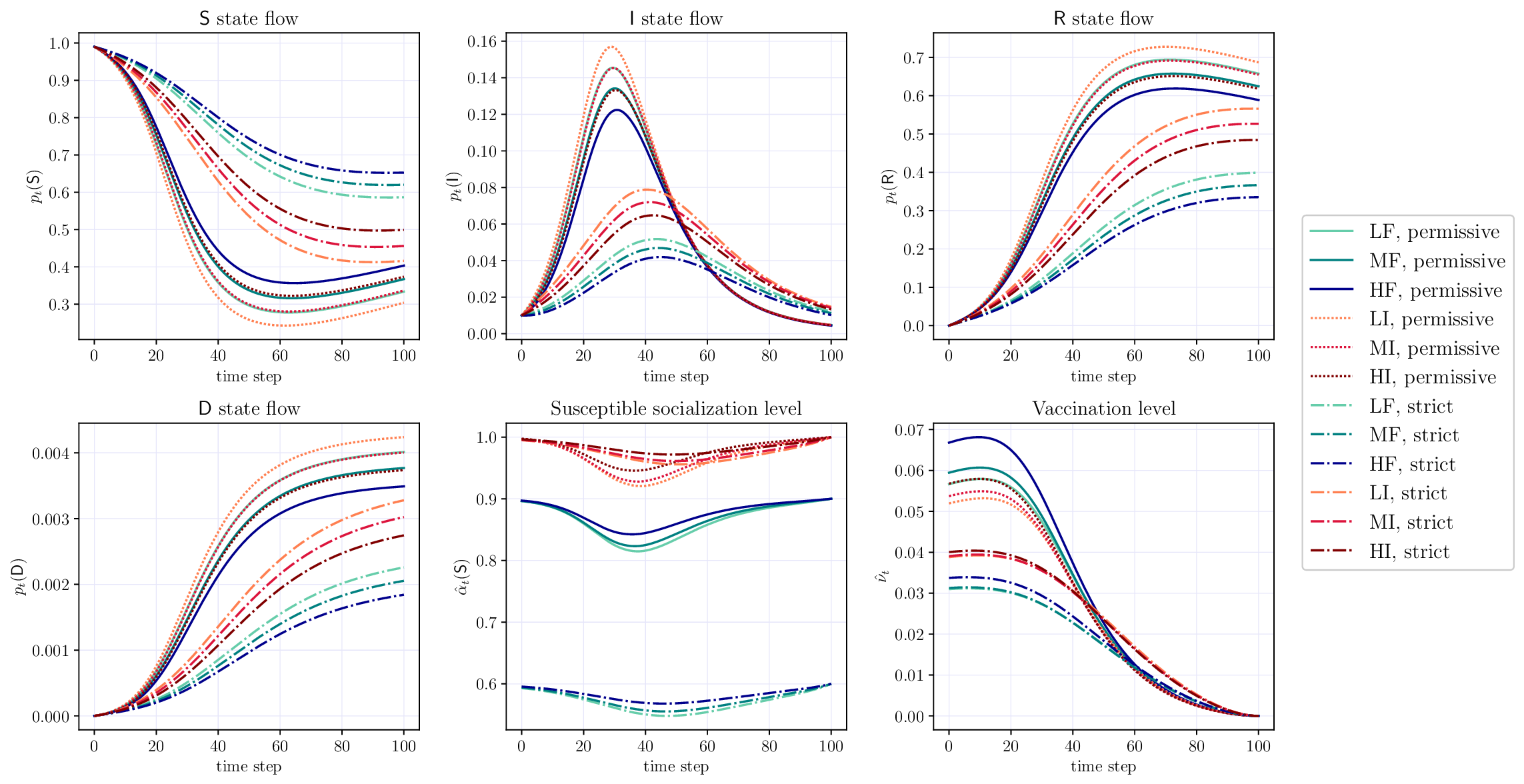}
    }
    \captionsetup{font=footnotesize}
    \caption{
        Pairwise comparison of epidemics mitigation effects under three policies for SIRD model. \textbf{Permissive}: $\lambda_t^{k, \sS} = 0.9, \lambda_t^{k, \sI} = 0.9, \lambda_t^{k, \sR} = 0.9$ for all groups; \textbf{Strict}: $\lambda_t^{k, \sS} = 0.6, \lambda_t^{k, \sI} = 0.6, \lambda_t^{k, \sR} = 0.6$ for all groups; \textbf{Adaptive}: $\lambda_t^{k, \sS} = 0.9, \lambda_t^{k, \sI} = 0.6, \lambda_t^{k, \sR} = 0.9$ for all groups.
    }
    \label{fig:policy_comparison_sird}
\end{figure*}

\begin{figure*}[h]
    \centering
    \subfloat[\textbf{Sensitivity analysis on $\rho$}]{
        \includegraphics[width=0.45\textwidth]{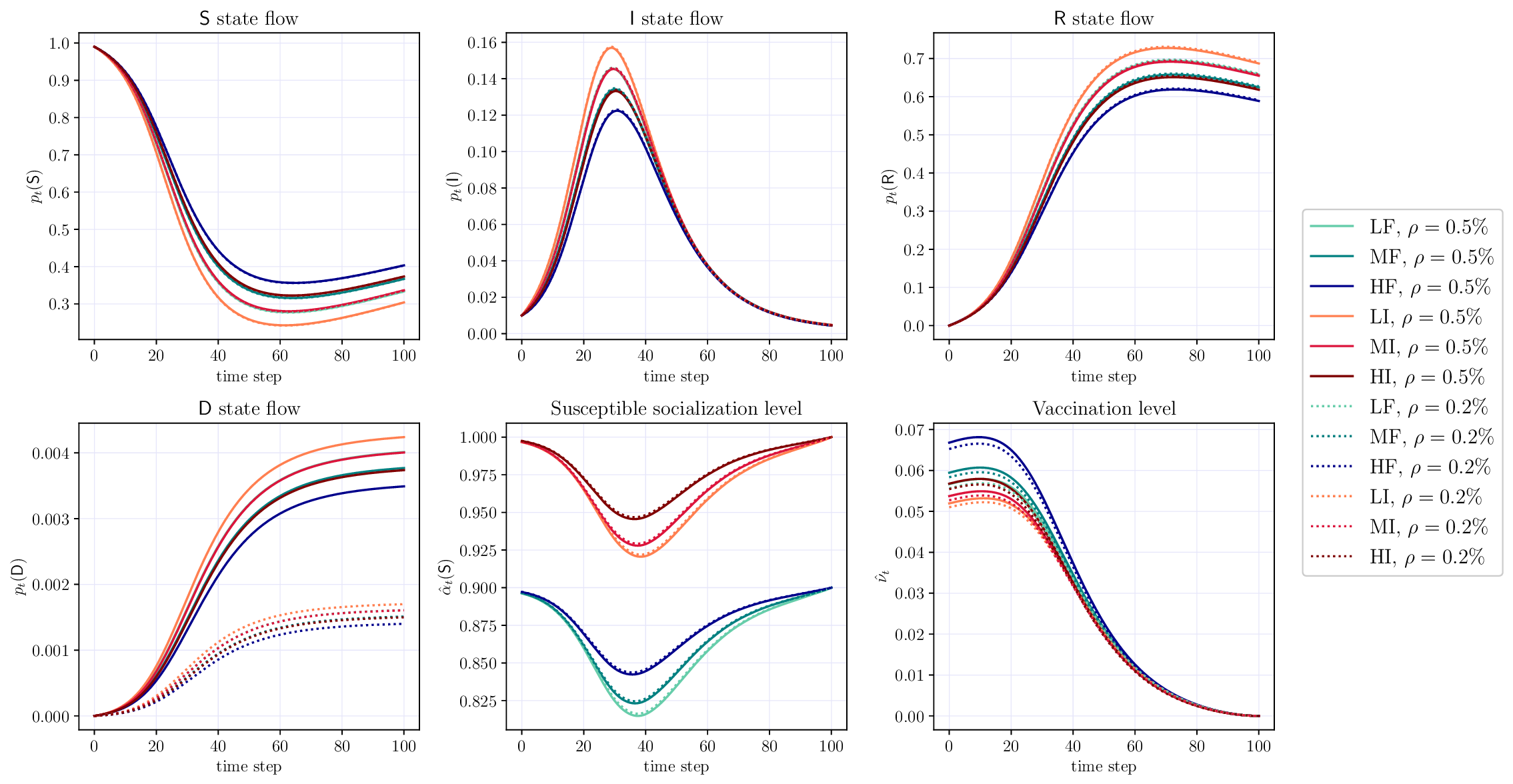}
        \label{fig:supp_rho_death}
    }
    \hfill
    \subfloat[\textbf{Sensitivity analysis on $\mathcal{D}$}]{
        \includegraphics[width=0.45\textwidth]{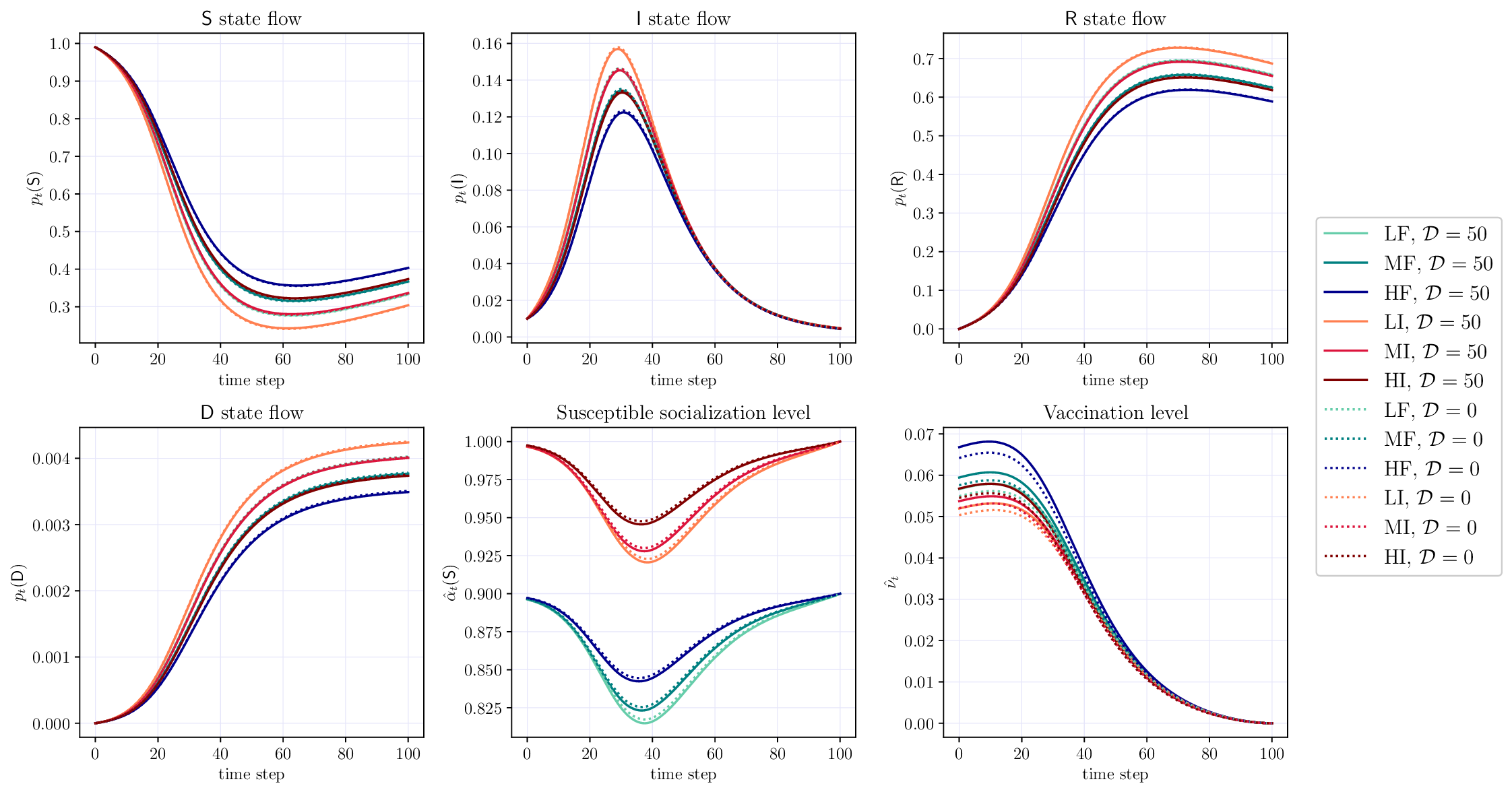}
        \label{fig:supp_D_death}
    }
    \captionsetup{font=footnotesize}
    \caption{
    Analysis on death state related parameters for SIRD model. Left: comparison of epidemics mitigation effects under two morality rates: $\rho^k = 0.5 \%$; $\rho^k = 0.2 \%$ for all groups. Right: Comparison of epidemics mitigation effects under two morality costs: $\cD^k = 80$; $\cD^k = 0$ for all groups.
    }
    \label{fig:rho_D_comparison_sird}
\end{figure*}

Figure~\ref{fig:policy_comparison_sird} pairwise compares the effects of the strict and adaptive social-distancing policies with the baseline permissive policy. The adaptive policy effectively suppresses the infection outbreak by reducing peak infection levels relative to the permissive one; however, even if the resulting proportion of deceased individuals is decreased, it stays at higher levels. In contrast, the strict policy leads to a notable reduction in overall deceased rates, with an even lower mortality observed among authority followers. This suggests that consistently enforced social interaction restrictions, though in potential more economically costly, are considerably more effective in mitigating mortality than adaptive or permissive policies.

Figure~\ref{fig:supp_rho_death} compares outcomes under two different mortality rates (i.e., sensitivity analysis on $\rho$). The proportion of deceased individuals increases approximately in proportion to the mortality rate parameter, with lower-income and authority indifferent groups having larger death proportions. Under a higher mortality rate, getting infected becomes more risky, as each infection carries a higher probability of death. Consequently, individuals are more inclined to vaccinate to avoid infection, while increased levels of vaccination encourages slightly higher socialization levels. The infection level decreases slightly, suggesting that comparisons between two lower mortality rates influence individual vaccination behavior more than the epidemic transmission process.

Figure~\ref{fig:supp_D_death} compares outcomes under two mortality cost settings (i.e., sensitivity analysis on $\mathcal{D}$). For a similar reason as above, when the mortality cost is higher, individuals exhibit greater vaccination uptake while slightly increasing their socialization levels. However, the resulting death proportions remain close to each other, indicating that more perceived panic of death does not substantially affect the overall mortality outcome, which is primarily determined by the intrinsic mortality rate of the infection.

\end{document}